\pgfplotsset{compat=1.18}
\newcommand\tsup[2][2]{%
 \def\useanchorwidth{T}%
  \ifnum#1>1%
    \stackon[-.5pt]{\tsup[\numexpr#1-1\relax]{#2}}{\scriptscriptstyle\sim}%
  \else%
    \stackon[.5pt]{#2}{\scriptscriptstyle\sim}%
  \fi%
}
\renewcommand{\ll}{\mathbf l}
\newcommand{\nn}{\mathbf n}
\newcommand{\rr}{\mathbf r}
\renewcommand{\ss}{\mathbf s}
\newcommand{\uu}{\mathbf u}
\newcommand{\ww}{\mathbf w}
\renewcommand{\AA}{\mathbf A}
\newcommand{\BB}{\mathbf B}
\newcommand{\DD}{\mathbf D}
\newcommand{\HH}{\mathbf H}
\newcommand{\MM}{\mathbf M}
\newcommand{\RR}{\mathbf R}
\renewcommand{\SS}{\mathbf S}
\newcommand{\TT}{\mathbf T}
\newcommand{\WW}{\mathbf W}
\newcommand{\MMe}{\MM^+}
\newcommand{\MMo}{\MM^-}
\newcommand{\SSe}{\SS^+}
\newcommand{\SSo}{\SS^-}
\newcommand{\R}{\mathcal R}
\newcommand{\Rh}{\R_h}
\renewcommand{\S}{\mathcal S}
\newcommand{\RRR}{\mathbb R}
\newcommand{\p}{\prime}
\renewcommand{\d}{\partial}
\newcommand{\duality}[2]{{\langle #1, #2 \rangle}}
\newcommand{\inner}[2]{{\left( #1, #2 \right)}}
\newcommand{\norm}[1]{{\left\Vert #1 \right\Vert}}
\newcommand{\normM}[1]{{\left\Vert #1 \right\Vert}_{\sigma_t}}
\newcommand{\norminf}[1]{{\left\Vert #1 \right\Vert}_{\infty}}
\newcommand{\normc}{\norm{\cdot}}
\newcommand{\TRh}{\mathcal{T}_h^R}
\newcommand{\TSh}{\mathcal{T}_h^S}
\newcommand{\Xh}{X_h}
\newcommand{\Sh}{S_h}
\newcommand{\Wh}{W_h}
\newcommand{\WhN}{W_{h,N}}
\newcommand{\nse}{n_S^+}
\newcommand{\nso}{n_S^-}
\newcommand{\nre}{n_R^+}
\newcommand{\nro}{n_R^-}
\def\bMSpheree{\boldsymbol{\mathsf{M}}\!^{+}}
\def\bMSphereo{\boldsymbol{\mathsf{M}}\!^{-}}
\def\bASphere{\boldsymbol{\mathsf{A}}}
\newcommand{\bThetae}{\mathbf{\Theta}^+}
\newcommand{\bThetao}{\mathbf{\Theta}^-}
\newcommand{\bMsco}{\MM_{\sigma_s}^-}
\newcommand{\bMsce}{\MM_{\sigma_s}^+}
\newcommand{\bMtro}{\MM_{\sigma_t}^-}
\newcommand{\bMtre}{\MM_{\sigma_t}^+}
\title{On accelerated iterative schemes for anisotropic radiative transfer using residual minimization}
\author{Riccardo Bardin \thanks{Department of Applied Mathematics, University of Twente, P.O. Box 217, 7500 AE Enschede, The Netherlands. \email{r.bardin@utwente.nl}.}
\and Matthias Schlottbom \thanks{Department of Applied Mathematics, University of Twente, P.O. Box 217, 7500 AE Enschede, The Netherlands. \email{m.schlottbom@utwente.nl}}}
\begin{document}
\maketitle

\begin{abstract}
    We consider the iterative solution of anisotropic radiative transfer problems using residual minimization over suitable subspaces. We show convergence of the resulting iteration using Hilbert space norms, which allows us to obtain algorithms that are robust with respect to finite-dimensional realizations via Galerkin projections. We investigate in particular the behavior of the iterative scheme for discontinuous Galerkin discretizations in the angular variable in combination with subspaces that are derived from related diffusion problems. The performance of the resulting schemes is investigated in numerical examples for highly anisotropic scattering problems with heterogeneous parameters.  
\end{abstract}

\begin{keywords}
	anisotropic radiative transfer, iterative solution, nonlinear preconditioning, convergence
\end{keywords}

\begin{AMS}
	65F08, 65F10, 65N22, 65N30, 65N45 
\end{AMS}

\section{Introduction}
\label{sec:Introduction}
The radiative transfer equation serves as a fundamental tool in predicting the interaction of electromagnetic radiation with matter, modeling  scattering, absorption and emission. As such, it has a key role in many scientific and societal applications, including medical imaging and tumor treatment \cite{AS09, HIS06}, energy efficient generation of white light \cite{RPBSN18}, climate sciences \cite{E98, STS17}, geosciences \cite{MWTLS17}, and astrophysics \cite{R11}. The stationary monochromatic radiative transfer equation is an integro-differential equation of the form
\begin{equation}
	\label{eq:RTE}
	\ss \cdot \nabla u(\rr,\ss) + \sigma_t(\rr) u(\rr,\ss) = \sigma_s(\rr)\int_{S^{d-1}} \theta(\ss\cdot\ss^{\p}) u(\rr,\ss^{\p})\,d\ss^{\p} + q(\rr,\ss) \quad \text{for } (\rr,\ss)\in  D:=R\times S^{d-1},
\end{equation}
where the specific intensity $ u = u(\rr,\ss) $ depends on the spatial coordinate $ \rr \in R\subset \RRR^d $ ($d=3$ for most practical applications) and on the direction $ \ss \in S^{d-1} $, with $S^{d-1}$ denoting the unit sphere in $\RRR^d$. The gradient appearing in \cref{eq:RTE} is taken with respect to $\rr$ only.
The physical properties of the medium covered by $R$ enter \eqref{eq:RTE} through the total attenuation (or transport) coefficient $ \sigma_t(\rr) := \sigma_a(\rr) + \sigma_s(\rr) $, which accounts for the absorption and scattering rates, respectively, and through the scattering kernel $ \theta(\ss\cdot\ss^{\p})$, which describes the probability of scattering from direction $\ss^{\p}$ into direction $\ss$. Internal sources of radiation are modeled by the function $q(\rr,\ss)$.
We complement \cref{eq:RTE} by non-homogeneous inflow boundary conditions
\begin{align}
    \label{eq:RTE_boundary_condition}
    u(\rr,\ss) = q_\d(\rr, \ss) \quad \text{for } (\rr,\ss) \in \d D_- \colonequals \{ (\rr,\ss)\in \d R \times S^{d-1}: \nn(\rr) \cdot \ss < 0 \},
\end{align}
with incoming intensity specified by $q_\d$.
Here $ \nn(\rr) $ denotes the outward normal unit vector field for a point $ \rr \in \d R $.
We refer to \cite{CZ67} for further details on the derivation of the radiative transfer equation. If $\theta(\ss\cdot\ss^\p) = 1/|S^{d-1}|$, scattering is called isotropic; otherwise, anisotropic.

\subsection{Approach and contribution}
\label{sec:Approach}
A common approach for showing well-posedness of \cref{eq:RTE} is to prove convergence of the following iterative scheme: Given $z_0$, compute the solution $z_{k+1}$ to
\begin{equation}
    \label{eq:SI}
    \ss \cdot \nabla z_{k+1} + \sigma_t z_{k+1} = \sigma_s \int_{S^{d-1}} \theta(\ss\cdot\ss^\p)z_k\,d\ss^\p + q \quad \text{ in }D,
\end{equation}
with $z_{k+1}=q_\d$ on $\d D_-$ for $k\geq 0$ \cite{AL02}. 
Under the condition that $\rho\colonequals\sup_{\rr}\sigma_s(\rr)/\sigma_t(\rr)<1$ one obtains linear convergence of $z_k$ towards $u$ with rate $\rho$ \cite{CZ63}; for more general conditions, see also \cite{ES13}.
Since in many applications mentioned above $\rho\approx 1$, the convergence of $z_k$ to $u$ is prohibitively slow. From a numerical point of view, \cref{eq:SI} serves as a starting point for constructing iterative solvers for discretizations of \cref{eq:RTE}.

In this paper we propose to accelerate the convergence of \cref{eq:SI} through residual minimization over suitable subspaces. 
Specifically, in analogy to \cref{eq:SI}, given $u_k$, we compute in a first step the solution $u_{k+1/2}$ to
\begin{equation}
    \label{eq:SI2}
    \ss \cdot \nabla u_{k+1/2} + \sigma_t u_{k+1/2} = \sigma_s \int_{S^{d-1}} \theta(\ss\cdot\ss^\p)u_k\,d\ss^\p + q, \quad \text{ in }D,
\end{equation}
with $u_{k+1/2}=q_\d$ on $\d D_-$. To proceed, let us introduce the residual 
\begin{equation}
    \label{eq:residual}
    \tilde r_k := \tilde \R(u_k) := q - \left(\ss \cdot \nabla u_k + \sigma_tu_k -  \sigma_s\int_{S^{n-1}} \theta(\ss\cdot\ss^\p)u_k\,d\ss^\p \right),
\end{equation}
and the preconditioned residual $r_k:=\R(u_k)$ that is defined as the solution to
\begin{equation}
    \label{eq:preconditioned_residual}
    \ss\cdot\nabla r_k + \sigma_t r_k = \tilde r_k \quad\text{ in } D,\quad r_k=0 \text{ on }\d D_-. 
\end{equation}
We note that $\R(u_k)=u_{k+1/2}-u_k$.
Using the weighted $L^2$-norm $\normM{v} \colonequals \| \sqrt{\sigma_t} v \|_{L^2(D)}$, we will show in \Cref{lem:contractivity_residuals_operator} below the following monotonicity result for the preconditioned residual.
\begin{lemma}
    \label{lem:contractivity_residuals}
    For $k\geq 0$ let $u_k,\ u_{k+1/2}$ be related by \cref{eq:SI2} and denote $r_k=\R(u_k)$ and $r_{k+1/2}=\R(u_{k+1/2})$ the respective preconditioned residuals defined in \cref{eq:preconditioned_residual}. Then it holds that
    \begin{align*}
        \normM{r_{k+1/2}} \leq \rho \normM{r_k}.
    \end{align*}
\end{lemma}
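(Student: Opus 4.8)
The plan is to collapse one step of the iteration onto the single operator identity $r_{k+1/2}=T^{-1}Kr_k$, where $Tv\colonequals\ss\cdot\nabla v+\sigma_t v$, $Kv\colonequals\sigma_s\int_{S^{d-1}}\theta(\ss\cdot\ss^\p)v(\rr,\ss^\p)\,d\ss^\p$, and $T^{-1}g$ denotes the solution of $Tw=g$ with zero inflow $w=0$ on $\d D_-$; then I would bound $\normM{T^{-1}Kw}$ by $\rho\normM{w}$ by estimating the transport solve and the scattering average separately.

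First I would exhibit the recursion. By \cref{eq:SI2}, $u_{k+1/2}$ solves $Tu_{k+1/2}=Ku_k+q$ with $u_{k+1/2}=q_\d$ on $\d D_-$, and by the remark preceding the statement $\R(u_k)=u_{k+1/2}-u_k$. Substituting $Tu_{k+1/2}=Ku_k+q$ into the definition \cref{eq:residual} of $\tilde r_{k+1/2}=q-Tu_{k+1/2}+Ku_{k+1/2}$ makes the source $q$ and the transport of $u_{k+1/2}$ cancel, leaving $\tilde r_{k+1/2}=Ku_{k+1/2}-Ku_k=K(u_{k+1/2}-u_k)=K\R(u_k)=Kr_k$ by linearity of $K$. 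Hence, by \cref{eq:preconditioned_residual}, $r_{k+1/2}=T^{-1}Kr_k$, and it suffices to prove $\normM{T^{-1}Kw}\le\rho\normM{w}$ for admissible $w$.

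Next I would prove the two auxiliary bounds. \emph{Transport dissipativity.} If $Tw=g$ with $w=0$ on $\d D_-$, then testing with $w$ and integrating by parts gives $\int_D(\ss\cdot\nabla w)\,w=\tfrac12\int_{\d D}(\nn\cdot\ss)\,w^2\ge 0$, because $w$ vanishes wherever $\nn\cdot\ss<0$; therefore $\normM{w}^2=\int_D\sigma_t w^2\le\int_D g\,w\le\norm{g/\sqrt{\sigma_t}}_{L^2(D)}\normM{w}$, i.e. $\normM{T^{-1}g}\le\norm{g/\sqrt{\sigma_t}}_{L^2(D)}$. \emph{Scattering nonexpansiveness.} Using that the phase function is nonnegative and normalized, $\int_{S^{d-1}}\theta(\ss\cdot\ss^\p)\,d\ss^\p=\int_{S^{d-1}}\theta(\ss\cdot\ss^\p)\,d\ss=1$, Cauchy--Schwarz in $\ss^\p$ against the probability density $\theta(\ss\cdot\ss^\p)\,d\ss^\p$ followed by integration in $\ss$ yields, pointwise in $\rr$, $\norm{K_0 w(\rr,\cdot)}_{L^2(S^{d-1})}\le\norm{w(\rr,\cdot)}_{L^2(S^{d-1})}$ with $K_0\colonequals\sigma_s^{-1}K$. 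Combining both estimates with $g=Kw$ and extracting $\rho=\sup_\rr\sigma_s(\rr)/\sigma_t(\rr)$,
\[
\normM{T^{-1}Kw}^2\le\norm{Kw/\sqrt{\sigma_t}}_{L^2(D)}^2=\int_R\frac{\sigma_s^2}{\sigma_t}\,\norm{K_0 w(\rr,\cdot)}_{L^2(S^{d-1})}^2\,d\rr\le\int_R\frac{\sigma_s^2}{\sigma_t}\,\norm{w(\rr,\cdot)}_{L^2(S^{d-1})}^2\,d\rr\le\rho^2\normM{w}^2 .
\]
Taking square roots and setting $w=r_k$ gives the claim.

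The two inequalities in the last step are routine; the step needing care is the first paragraph, namely verifying that re-forming the residual after a half-step exactly annihilates the transport and source contributions, so that the inhomogeneous boundary-value iteration reduces to the clean linear recursion $r_{k+1/2}=T^{-1}Kr_k$ on the zero-inflow residual space. Once this identity is available, the contraction factor $\rho$ follows solely from the dissipativity of the transport operator and the $L^2(S^{d-1})$-nonexpansiveness of averaging against $\theta$; no smallness of $\theta$ beyond its normalization is used. The same computation, phrased operator-theoretically in a Hilbert-space setting, should yield the more general \Cref{lem:contractivity_residuals_operator}.
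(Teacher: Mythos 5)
Your proof is correct, and its skeleton is the same as the paper's: first reduce one half-step to the recursion $r_{k+1/2}=T^{-1}Kr_k$, then show that the map $w\mapsto T^{-1}Kw$ is a $\rho$-contraction in $\normM{\cdot}$. What differs is the level at which you execute it. The paper phrases everything variationally: your identity $r_{k+1/2}=T^{-1}Kr_k$ is exactly $t(r_{k+1/2},v)=s(r_k,v)$ for all $v\in W$ in \Cref{lem:contractivity_residuals_operator}, your transport-dissipativity estimate is the coercivity \cref{eq:pos_t} (the boundary term you discard is precisely $(|\ss\cdot\nn|\,w^+,w^+)_{\d D}$ in \cref{eq:bilinear_form_T}), and your Jensen/Cauchy--Schwarz bound on the scattering average is the continuity \cref{eq:cont_s}, which the paper imports from the boundedness of $\Theta$. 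Two remarks on what each route buys. First, your strong-form integration by parts requires $r_k\in V_1$; this does hold here, since $r_k$ solves a scattering-free transport problem with $L^2$ data and zero inflow so that $\ss\cdot\nabla r_k=\tilde r_k-\sigma_t r_k\in L^2(D)$, but it should be said explicitly, whereas the paper's argument runs entirely in the mixed space $W=V_1^+\oplus V_0^-$ and never needs the full directional derivative of the iterates $u_k$ themselves. Second, and more importantly for this paper, the variational phrasing is what makes the identical proof carry over verbatim to the Galerkin discretization (\Cref{thm:convergence_discrete}): the discrete residual satisfies $t(\R_h(u_{h,k+1/2}),v_h)=s(\R_h(u_{h,k}),v_h)$ on $\Wh$ and the same two inequalities apply, whereas your pointwise cancellation of $q$ and $Tu_{k+1/2}$ has no direct discrete counterpart. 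So your argument proves \Cref{lem:contractivity_residuals} as stated, but the paper's formulation is the one that generalizes.
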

In view of the monotonicity of the residuals, we look for a correction to the intermediate iterate $u_{k+1/2}$ by residual minimization, i.e., 
\begin{equation}
    \label{eq:minimization_problem}
    u_{k+1/2}^c \colonequals \underset{v \in W_{N}}{\mathrm{argmin}} \normM{\R(u_{k+1/2}+v)},
\end{equation}
where $W_{N}$ is a suitable finite-dimensional linear space of dimension $N$, and set
\begin{align}
    \label{eq:new_iterate}
    u_{k+1} \colonequals u_{k+1/2} + u_{k+1/2}^c.
\end{align}
Using the minimization property and the equivalence between the residual and the error, we will show our main convergence statement.
\begin{theorem}
  \label{thm:main_result}
   For any initial guess $u_0\in L^2(D)$, the sequence $\{u_k\}$ defined via \cref{eq:SI2} and \cref{eq:new_iterate} converges linearly to the solution $u$ of \cref{eq:RTE} with rate $\rho$, i.e.,
    \begin{equation}
        \label{eq:main_conv}
        \normM{u - u_{k}} \leq \frac{\rho^k}{1-\rho}\normM{r_0}, \quad k\geq 0.
    \end{equation}   
\end{theorem}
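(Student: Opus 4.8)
The plan is to combine \Cref{lem:contractivity_residuals} with two elementary facts: the correction step is nonexpansive on the preconditioned residual, and the error is controlled by the current preconditioned residual in the norm $\normM\cdot$. Throughout write $r_k=\R(u_k)$.

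\emph{Step 1: geometric decay of $r_k$.} Since $W_N$ is a linear space, $0\in W_N$, so the minimization property \cref{eq:minimization_problem} (tested with $v=0$) together with \cref{eq:new_iterate} gives
\begin{equation*}
  \normM{r_{k+1}}=\normM{\R(u_{k+1/2}+u_{k+1/2}^c)}\le\normM{\R(u_{k+1/2})}=\normM{r_{k+1/2}}.
\end{equation*}
Combining with \Cref{lem:contractivity_residuals} yields $\normM{r_{k+1}}\le\normM{r_{k+1/2}}\le\rho\,\normM{r_k}$ for all $k\ge0$, and hence $\normM{r_k}\le\rho^{k}\normM{r_0}$ by induction.

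\emph{Step 2: error versus residual.} I would next show $\normM{u-u_k}\le(1-\rho)^{-1}\normM{r_k}$. The half-step \cref{eq:SI2} is a source-iteration step, so $w\colonequals u-u_{k+1/2}$ satisfies $\ss\cdot\nabla w+\sigma_t w=\sigma_s\int_{S^{d-1}}\theta(\ss\cdot\ss^\p)(u-u_k)(\cdot,\ss^\p)\,d\ss^\p$ with zero inflow trace. Testing this against $w$, using $\sup_\rr \sigma_s/\sigma_t=\rho$ and the $L^2(S^{d-1})$-nonexpansiveness of the scattering average $v\mapsto\int_{S^{d-1}}\theta(\ss\cdot\ss^\p)v(\cdot,\ss^\p)\,d\ss^\p$, one obtains $\normM{u-u_{k+1/2}}\le\rho\,\normM{u-u_k}$ --- precisely the estimate that underlies \Cref{lem:contractivity_residuals}. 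Since $u_{k+1/2}-u_k=\R(u_k)=r_k$ (the identity recorded after \cref{eq:preconditioned_residual}),
\begin{equation*}
  \normM{u-u_k}\le\normM{u-u_{k+1/2}}+\normM{u_{k+1/2}-u_k}\le\rho\,\normM{u-u_k}+\normM{r_k},
\end{equation*}
so $\normM{u-u_k}\le(1-\rho)^{-1}\normM{r_k}$. Chaining Steps 1 and 2 gives $\normM{u-u_k}\le\rho^{k}(1-\rho)^{-1}\normM{r_0}$, i.e.\ \cref{eq:main_conv}; since $\rho<1$, this also yields $u_k\to u$.

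The proof is short once \Cref{lem:contractivity_residuals} is available, and I expect the only real care to lie in the treatment of inflow traces. The identity $\R(u_k)=u_{k+1/2}-u_k$ used in Step 2 presupposes that $u_k$ has inflow data $q_\d$ on $\d D_-$: this holds for $k\ge1$ because \cref{eq:SI2} enforces it and the corrections in $W_N$ do not alter the inflow trace, and for $k=0$ one either assumes the same of $u_0$ or starts counting from the first half-step. A secondary point is that the contraction used in Step 2 must be established directly in the weighted norm $\normM\cdot$ rather than in a classical $L^\infty$- or $L^1$-norm; this, however, is exactly what the analysis leading to \Cref{lem:contractivity_residuals} delivers, so it is not a genuine obstacle.
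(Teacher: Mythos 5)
Your proof is correct and follows essentially the same route as the paper: Step 1 (minimization with $0\in W_N$ plus the residual contraction) is exactly the paper's argument, and Step 2 is precisely the paper's \Cref{lem:upper_bound_error_residual}, since with $v=u-u_k$ the paper's quantity $\R_0v+v$ equals $u-u_{k+1/2}$, so your chain $\normM{e_{k+1/2}}\le\rho\normM{e_k}$ followed by the triangle inequality is the same computation written in terms of the iterates rather than the affine operator $\R_0$. Your caveat about inflow traces is moot in the variational setting the paper actually uses, where $\R(u_k)=u_{k+1/2}-u_k$ follows for any $u_k\in W$ directly from $t(u_{k+1/2}-u_k,v)=\duality{\tilde{\R}(u_k)}{v}$ and unique solvability of the transport problem.
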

In general, as shown numerically below, the theoretical bound in \cref{eq:main_conv} is too pessimistic, because it does not show the dependence on $W_{N}$. 
Indeed, by choosing $W_N=\{0\}$, we obtain $u_k=z_k$ if $u_0=z_0$, and the bound in \cref{eq:main_conv} resembles the error estimate for the iteration defined through \cref{eq:SI}.
However, we highlight the robust convergence of the proposed scheme for any choice of $W_{N}$.
For example, $W_{N}$ may contain previous iterates, which allows us to relate \cref{eq:SI2}, \cref{eq:new_iterate} to preconditioned GMRES methods or Anderson acceleration techniques, cf. \cite{WN11}. Another example is to construct $W_{N}$ by solving low-dimensional diffusion problems, see \Cref{sec:numerical_realization} for details and \Cref{sec:Numerics} for the improved convergence behavior, where we particularly consider high-order diffusion problems for highly anisotropic scattering.

The outlined approach serves as a blueprint for constructing discrete schemes. To do so, we will employ suitable (Galerkin) discretization schemes such that the monotonicity properties of the residuals are automatically guaranteed.
In view of the inversion of the transport term in \cref{eq:SI2}, we will particularly focus on discontinuous Galerkin discretizations in $\ss$ considered in \cite{DPS22}, which allow for straightforward parallelization. Such discretizations inherit similar convergence properties as the iteration described above.
This general framework can be related to existing work, as discussed next.

\subsection{Related works}
\label{sec:Related_works}

The scheme \cref{eq:SI} has been combined with several discretization methods to obtain a practical solver for radiative transfer problems. These numerical schemes are typically local in  $\ss$, such as the discrete ordinates method, also known as $S_N$-method. We refer to \cite{AL02,ML86} for an overview of classical approaches and well-established references, and to \cite{RGK12, SH20, SH21} for more recent strategies.
The main drawback of \cref{eq:SI} is the well-known slow convergence for $\rho\approx 1$. Several approaches for the acceleration of \cref{eq:SI} have been proposed in the literature, cf. \cite{AL02, ML86} for a discussion.
As observed in \cite{AL02}, \cref{eq:SI} is a preconditioned Richardson iteration for solving \cref{eq:RTE}. 
One approach to obtain faster convergence is to employ more effective preconditioners.
Among the most popular ones, we mention preconditioners that are based on solving (non-)linear diffusion problems, which are well motivated by asymptotic analysis, see again, e.g., \cite{AL02} for classical approaches, or \cite{AW18, WA18} for more recent developments.

The success of diffusion-based acceleration schemes hinges on so-called \textit{consistent} discretization of \cref{eq:RTE} and the corresponding diffusion problem \cite{AL02}. 
In \cite{RW10} consistent correction equations are obtained for two-dimensional problems with anisotropic scattering by using a modified interior penalty discontinuous Galerkin approximation for the diffusion problem. The corresponding acceleration scheme is, however, less effective for highly heterogeneous optical parameters. A discrete analysis of similar methods for high-order discontinuous Galerkin discretizations can be found in \cite{HSMT20}. We refer also to \cite{SHH20} for the development of preconditioners for heterogeneous media.
Instead of constructing special discretizations for the diffusion problems for each discretization scheme of \cref{eq:RTE}, consistent discretizations can automatically be obtained by using subspace corrections of suitable Galerkin approximations of \cref{eq:RTE} \cite{PS20, OPHY23, DPS22}. 
In \cite{OPHY23} isotropic scattering problems have been solved for a discrete ordinates-discontinuous Galerkin discretization using nonlinear diffusion problems and Anderson acceleration. The approach presented in \cite{OPHY23} reduces the full transport problem to a nonlinear diffusion equation for the angular average only, which is, however, not possible for anisotropic scattering.
The approach taken in \cite{DPS22} for anisotropic scattering problems employs a positive definite, self-adjoint second-order form of \cref{eq:RTE}, which facilitates the convergence analysis, but it requires another iterative method to actually apply the resulting matrices. The approach taken here avoids such extra inner iterations at the expense of dealing with indefinite problems.
The flexibility of our approach in constructing the spaces $W_{N}$ for the residual minimization allows us to employ similar subspaces as in \cite{DPS22}, which have been shown to converge robustly for arbitrary meshes and for forward-peaked scattering.

To treat forward-peaked scattering, for a one-dimensional radiative transfer equation, \cite{WPT15} applies nonlinear diffusion correction and Anderson acceleration, which minimizes the residual over a certain subspace, and is therefore conceptually close to our approach outlined above. Different from \cite{WPT15}, where a combination of the $S_N$-method with a finite difference method for the discretization of $\rr$ has been used and the corresponding minimizations are done in the Euclidean norm, our framework allows for general discretizations for multi-dimensional problems, such as arbitrary order (discontinuous) Galerkin schemes, to discretize $\ss$ and $\rr$.
Moreover, our framework allows us to employ higher-order correction equations, similar to \cite{DPS22}, whose effectiveness becomes apparent in our numerical examples for highly forward-peaked scattering. In addition, our Hilbert space approach, which is provably convergent, leads to algorithms that behave robustly under mesh refinements.

A second approach for accelerating \cref{eq:SI} is to replace the preconditioned Richardson iterations with other Krylov space methods.
For instance, \cite{WWM04} employs a GMRES method, which is preconditioned by solving a diffusion problem, to solve three-dimensional problems with isotropic scattering.
To treat highly forward-peaked scattering, \cite{TRM12} combines GMRES with an angular (in the $\ss$ variable) multigrid method to accelerate convergence; see also \cite{L10, KR14}, and \cite{DBSSP21} for a comparison of multilevel approaches.
By appropriately choosing $W_{N}$ in \cref{eq:minimization_problem} the approach outlined in \Cref{sec:Approach} can be related to a preconditioned GMRES method.
Since the domain $D$ has dimension $2d-1$, building up a full Krylov space during GMRES iterations becomes prohibitive in terms of memory, and GMRES has to be restarted.
Our numerical results, cf. also \cite{DPS22}, show that high-order diffusion corrections can lead to effective schemes with small memory requirements for highly forward-peaked scattering.

\subsection{Outline}
\label{sec:Outline}
The remainder of the manuscript is organized as follows.
In \Cref{sec:Notation_and_preliminaries} we introduce notation and basic assumptions on the optical parameters, and we present a weak formulation of \cref{eq:RTE}, \cref{eq:RTE_boundary_condition}, which allows for a rigorous proof of \Cref{lem:contractivity_residuals} in \Cref{sec:Contraction_properties_SI}. In \Cref{sec:Residual_minimization} we turn to the analysis of the minimization problem \cref{eq:minimization_problem} and prove \Cref{thm:main_result}.
In \Cref{sec:numerical_realization} we discuss a discretization strategy that implements the approach described in \Cref{sec:Approach} such that our main convergence results remain true for the discrete systems. We discuss several choices of $W_{N}$ there.
The practical performance of the proposed methodology for different choices of spaces $W_{N}$ in \cref{eq:minimization_problem} is investigated in \Cref{sec:Numerics}.

\section{Notation and preliminaries}
\label{sec:Notation_and_preliminaries}
In the following, we recall the main functional analytic framework and state the variational formulation of \cref{eq:RTE}-\cref{eq:RTE_boundary_condition}, with a well-posedness result. 

\subsection{Function spaces}
\label{sec:Function_spaces}

We denote with $V_0:=L^2(D)$ the usual Hilbert space of square integrable functions on the domain $D:=R\times S^{d-1}$, with inner product $\inner{\cdot}{\cdot}$ and induced norm $\normc$. In order to incorporate boundary conditions, we assume that $R$ has a Lipschitz boundary and we denote with $L^2(\d D_-; |\ss\cdot\nn|)$ the space of weighted square-integrable functions on the inflow boundary, and with $\inner{\cdot}{\cdot}_{\d D_-}$ the corresponding inner product. For smooth functions $v,w\in C^{\infty}(\bar D)$ we define
\begin{equation}
    \label{eq:inner_product_V1}
    \inner{v}{w}_{V_1} := \inner{v}{w} + \inner{\ss\cdot\nabla v}{\ss\cdot\nabla w}+ \inner{|\ss\cdot\nn|v}{w}_{\d D_-},
\end{equation}
and we denote with $V_1$ the completion of $C^{\infty}(\bar D)$ with respect to the norm associated with \cref{eq:inner_product_V1}:
\begin{equation*}
    V_1 = \left\{ v\in V_0: \ss\cdot\nabla v \in V_0,\, v|_{\d D_-} \in L^2(\d D_-;|\ss\cdot\nn|)\right\}.
\end{equation*}
For functions $v,w\in V_1$, we recall the following integration by parts formula, see, e.g., \cite{ES12},
\begin{equation}
    \label{eq:integration_by_parts}
    \inner{\ss\cdot \nabla v}{w} = - \inner{v}{\ss\cdot\nabla w} + \inner{\ss\cdot\nn v}{w}_{\d D_-}.
\end{equation}

\subsection{Optical parameters and data}
\label{sec:Optical_parameters_and_data}
Standard assumptions for the source data are $q \in V_0$ and $q_\d \in L^2(\d D_-; |\ss\cdot\nn|)$. The optical parameters $\sigma_s$ and $\sigma_t$ are supposed to be positive and essentially bounded functions of $\rr$. The medium $R$ is assumed to be absorbing, i.e., there exists $c_0>0$ such that $\sigma_a \geq c_0$ a.e. in $R$. This hypothesis ensures that the ratio between the scattering rate and the total attenuation rate is strictly less than $1$, i.e., $\rho:=\norminf{\sigma_s/\sigma_t} < 1$. 
We assume that the phase function $\theta:[-1,1]\to\RRR$ is non-negative and normalized such that $\int_{S^{d-1}}\theta(\ss\cdot\ss^\p) d\ss^\p=1$ for a.e. $\ss\in S^{d-1}$.
To ease the notation we introduce the operators $\S,\Theta:V_0\to V_0$ such that
\begin{equation*}
    (\S v)(\rr,\ss) := \sigma_s(\rr)(\Theta v)(\rr,\ss), \qquad (\Theta v)(\rr,\ss):= \int_{S^{d-1}} \theta(\ss\cdot\ss^\p) v(\rr,\ss^\p)\,d\ss^\p.  
\end{equation*}
We recall that $\S,\Theta$ are self-adjoint bounded linear operators, with operator norms bounded by $\|\sigma_s\|_\infty$ and $1$, respectively, i.e., $\Vert \Theta v \Vert_{V_0} \leq \Vert v \Vert_{V_0}$, see, e.g., \cite[Lemma 2.6]{ES12}.

\subsection{Even-odd splitting}
\label{sec:Even-odd_splitting}
For $v\in V_0$, we define its even and odd parts, identified by the superscripts $"+"$ and $"-"$, respectively, by
\begin{equation*}
    v^{\pm}(\rr,\ss) := \dfrac{1}{2} \left( v(\rr, \ss) \pm v(\rr, -\ss) \right).
\end{equation*}
Accordingly, for any space $V$ we denote by $V^{\pm} := \{ v^{\pm}:v\in V\}$ the subspaces of even and odd functions of $V$. In particular, any $V\in \{V_0, V_1\}$ has the orthogonal (with respect to the inner product of $V$) decomposition $V = V^+ \oplus V^-$. Following \cite{ES12}, a suitable space for the analysis of the radiative transfer equation is the space of mixed regularity
\begin{equation*}
    W := V_1^+ \oplus V_0^-,
\end{equation*}
where only the even components have weak directional derivatives in $V_0$.

\subsection{Variational formulation}
\label{sec:variational_formulation}
Assuming that $u$ is a smooth solution to \cref{eq:RTE}-\cref{eq:RTE_boundary_condition}, we use standard procedures to derive a weak formulation. Multiplying \cref{eq:RTE} by a smooth test function $v$, splitting the functions in their even and odd components, and using the integration by parts formula \cref{eq:integration_by_parts} to handle the term $(\ss\cdot\nabla u^-,v^+)$, we obtain the following variational principle, see \cite{ES12} for details: find $u\in W$ such that for all $v\in W$
\begin{equation}
    \label{eq:weak_formulation}
    t(u,v) = s(u,v) + \ell(v),
\end{equation}
with bilinear forms $t,s:W\times W \to \RRR$, and linear form $\ell:W\to\RRR$ defined by
\begin{align}
    \label{eq:bilinear_form_T}
    t(u,v) & := \inner{\ss\cdot\nabla u^+}{v^-} - \inner{u^-}{\ss\cdot\nabla v^+} + \inner{\sigma_t u}{v} + (|\ss\cdot\nn|\, u^+,v^+)_{\d D}, \\
    s(u,v) & := \inner{\S u}{v}, \\
    \ell(v) & := \inner{q}{v} - 2 (\ss\cdot\nn\, q_\d, v^+)_{\d D_-}.
\end{align}
As shown in \cite[Section 3]{ES12} the assumptions imposed in \Cref{sec:Optical_parameters_and_data} imply that there exists a unique solution of \cref{eq:weak_formulation} satisfying 
\begin{equation*}
    \norm{u}_W \leq C(\norm{q} + \norm{q_\d}_{L^2(\d D_-;|\ss\cdot\nn|)}),
\end{equation*}
with constant $C>0$ depending only on $c_0$ and $\norminf{\sigma_t}$. 
Let us also recall from \cite{ES12} that the odd part of the weak solution $u\in W$ enjoys $u^- \in V_1$, and that $u$ satisfies \cref{eq:RTE} almost everywhere, and \cref{eq:RTE_boundary_condition} in the sense of traces.
Before proceeding, we collect some properties of the bilinear forms $s$ and $t$, which we will use in our analysis below.
\begin{lemma}
\label{lem:properties_s_t}
    Let $\rho = \|\sigma_s/\sigma_t\|_\infty$. For any $u,v\in W$ the following inequalities hold:
    \begin{align}
        \normM{u}^2 &\leq t(u,u),\label{eq:pos_t}\\
        s(u,v) &\leq \rho \normM{u} \normM{v}.    \label{eq:cont_s}
    \end{align}
\end{lemma}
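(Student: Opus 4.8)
The plan is to verify the two inequalities directly from the definitions of $t$ and $s$ together with the structural assumptions on the optical parameters collected in \Cref{sec:Optical_parameters_and_data}. For \cref{eq:pos_t}, first I would compute $t(u,u)$ from \cref{eq:bilinear_form_T}. The crucial observation is that the two transport terms cancel: since $\inner{u^-}{\ss\cdot\nabla u^+} = \inner{\ss\cdot\nabla u^+}{u^-}$ (the inner product on $V_0$ is symmetric and real), the difference $\inner{\ss\cdot\nabla u^+}{u^-} - \inner{u^-}{\ss\cdot\nabla u^+}$ vanishes. What remains is $t(u,u) = \inner{\sigma_t u}{u} + (|\ss\cdot\nn|\, u^+, u^+)_{\d D}$. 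The first term equals $\normM{u}^2$ by definition of the weighted norm, and the boundary term is nonnegative because $|\ss\cdot\nn| \geq 0$; hence $\normM{u}^2 \leq t(u,u)$.

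For \cref{eq:cont_s}, I would start from $s(u,v) = \inner{\S u}{v} = \inner{\sigma_s \Theta u}{v}$. The idea is to factor out the weight $\sigma_t$ so as to land on the $\normM{\cdot}$ norm, writing $\inner{\sigma_s \Theta u}{v} = \inner{\tfrac{\sigma_s}{\sigma_t}\sqrt{\sigma_t}\,\Theta u}{\sqrt{\sigma_t}\, v}$. Applying Cauchy--Schwarz in $V_0$ and bounding $\sigma_s/\sigma_t$ pointwise by $\rho = \norminf{\sigma_s/\sigma_t}$ gives $s(u,v) \leq \rho \, \norm{\sqrt{\sigma_t}\,\Theta u} \, \norm{\sqrt{\sigma_t}\, v} = \rho\,\normM{\Theta u}\,\normM{v}$. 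It then remains to show $\normM{\Theta u} \leq \normM{u}$. This follows from the fact, recalled in \Cref{sec:Optical_parameters_and_data}, that $\Theta$ has operator norm at most $1$ on $V_0$: since the weight $\sqrt{\sigma_t}$ depends only on $\rr$ while $\Theta$ acts only in the angular variable $\ss$, the two commute, so $\normM{\Theta u} = \norm{\Theta(\sqrt{\sigma_t}\,u)} \leq \norm{\sqrt{\sigma_t}\, u} = \normM{u}$; one may also argue fiberwise in $\rr$ using $\norm{\Theta v}_{L^2(S^{d-1})} \leq \norm{v}_{L^2(S^{d-1})}$ and integrating against $\sigma_t(\rr)\,d\rr$.

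Neither step involves a genuine obstacle; the only point that needs a little care is the commutation of the spatial weight $\sqrt{\sigma_t}$ with the angular averaging operator $\Theta$ in the second inequality, and the reliance on the integration-by-parts identity \cref{eq:integration_by_parts} — applied in the form that makes the mixed transport terms in $t(u,u)$ genuinely cancel. One should double-check that $u \in W$ suffices for these manipulations: the even part $u^+$ lies in $V_1^+$ so $\ss\cdot\nabla u^+ \in V_0$ and its trace is controlled, while $u^- \in V_0^-$, so all pairings appearing in $t(u,u)$ are well-defined, and the boundary term on $\d D$ is finite.
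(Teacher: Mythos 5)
Your proof is correct and follows essentially the same route as the paper: the off-diagonal transport terms in $t(u,u)$ cancel by symmetry of the real $V_0$ inner product (no integration by parts is needed for this, contrary to your closing remark), and the bound on $s$ is the paper's Cauchy--Schwarz argument with the commutation of $\Theta$ and the spatial weight $\sqrt{\sigma_t}$ made explicit. Nothing to fix.
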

\begin{proof} 
    The inequality \cref{eq:pos_t} follows immediately from \cref{eq:bilinear_form_T} and the definition of the weighted norm. To show \cref{eq:cont_s} we apply the Cauchy-Schwarz inequality to obtain that
    \begin{align*}
        s(u,v) = \inner{\frac{\sigma_s}{\sigma_t}  \Theta(\sqrt{\sigma_t}u)}{\sqrt{\sigma_t}v} 
        \leq \rho \|\Theta(\sqrt{\sigma_t} u)\|  \|\sqrt{\sigma_t} v\| \leq \rho \normM{u} \normM{v},
    \end{align*}
    where we used the boundedness of $\Theta$ in the last step.
\end{proof}

\section{Contraction properties of the source iteration}
\label{sec:Contraction_properties_SI}
Equipped with the notation from the previous section, we can write \cref{eq:SI2} as follows: Given $u_k\in W$, compute $u_{k+1/2}\in W$ such that
\begin{equation}
    \label{eq:iteration_equation_bilinear}
    t(u_{k+1/2}, v) = s(u_k, v) + \ell(v), \quad \forall v\in W.
\end{equation}
The following result is well-known, and we provide a proof for later reference.
\begin{lemma}
    \label{lem:contractivity_errors}
    Let $u_k, u_{k+1/2}\in W$ be related via \cref{eq:iteration_equation_bilinear}. Then it holds that
    \begin{equation*}        
        \normM{u-u_{k+1/2}} \leq \rho\normM{u-u_k},
    \end{equation*}
    with $\rho = \|\sigma_s/\sigma_t\|_\infty$.
\end{lemma}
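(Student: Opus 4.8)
The plan is to exploit a Galerkin-type orthogonality between the exact weak solution and the iterate, and then test with the error itself, using the two inequalities from \Cref{lem:properties_s_t}. Since the exact solution $u\in W$ satisfies the weak formulation \cref{eq:weak_formulation}, i.e., $t(u,v)=s(u,v)+\ell(v)$ for all $v\in W$, and $u_{k+1/2}$ satisfies \cref{eq:iteration_equation_bilinear}, subtracting the two identities eliminates the linear form $\ell$ and yields
\begin{equation*}
    t(u-u_{k+1/2},v) = s(u-u_k,v) \qquad \text{for all } v\in W.
\end{equation*}

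The key step is then to insert the admissible test function $v = u-u_{k+1/2}\in W$. Applying the coercivity-type bound \cref{eq:pos_t} on the left and the continuity bound \cref{eq:cont_s} on the right gives
\begin{equation*}
    \normM{u-u_{k+1/2}}^2 \leq t(u-u_{k+1/2},u-u_{k+1/2}) = s(u-u_k,u-u_{k+1/2}) \leq \rho\,\normM{u-u_k}\,\normM{u-u_{k+1/2}}.
\end{equation*}
Dividing through by $\normM{u-u_{k+1/2}}$ (treating the degenerate case $\normM{u-u_{k+1/2}}=0$ separately, where the claimed inequality holds trivially since $\rho\normM{u-u_k}\geq 0$) produces exactly $\normM{u-u_{k+1/2}}\leq\rho\normM{u-u_k}$.

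I do not expect any genuine obstacle here: the argument is the standard ``subtract the equations, test with the error'' technique, and every ingredient — well-posedness of \cref{eq:weak_formulation} placing $u$ in $W$, membership of the error in $W$, and the two estimates of \Cref{lem:properties_s_t} — is already available in the excerpt. The only point requiring a word of care is the division by a possibly vanishing norm, which is handled by the trivial case distinction noted above; everything else is a direct substitution.
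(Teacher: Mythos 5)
Your argument is exactly the paper's proof: subtract \cref{eq:weak_formulation} from \cref{eq:iteration_equation_bilinear} to obtain the error equation \cref{eq:half_step_error_equation}, test with $e_{k+1/2}=u-u_{k+1/2}$, and combine \cref{eq:pos_t} with \cref{eq:cont_s} before cancelling one factor of $\normM{e_{k+1/2}}$. The explicit treatment of the degenerate case $\normM{e_{k+1/2}}=0$ is a minor added care the paper leaves implicit; otherwise the two proofs coincide.
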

\begin{proof}
    Abbreviating $e_{k+1/2}:=u-u_{k+1/2}$ and $e_k:=u-u_k$, \cref{eq:weak_formulation} and \cref{eq:iteration_equation_bilinear} imply that
    \begin{equation}
        \label{eq:half_step_error_equation}
        t(e_{k+1/2},v) = s(e_k, v), \quad \forall v\in W.
    \end{equation}
    Therefore, setting $v=e_{k+1/2}$ in \cref{eq:half_step_error_equation}, using \cref{eq:pos_t} and \cref{eq:cont_s}, we obtain the estimates
    \begin{align*}
        \normM{e_{k+1/2}}^2 \leq t(e_{k+1/2},e_{k+1/2}) = s(e_k, e_{k+1/2}) \leq \rho \normM{e_{k+1/2}} \normM{e_k},
    \end{align*}
    which concludes the proof.
\end{proof}
Along the lines of \cref{eq:residual} and \cref{eq:preconditioned_residual}, we define the residual operator $\tilde{\R}:W\to W^{\ast}$ by
\begin{equation*}
    \duality{\tilde{\R}(w)}{v} := \ell(v) - (t(w,v) - s(w,v)), \quad \forall v\in W.
\end{equation*}
Here, $W^*$ denotes the dual space of $W$, and $\langle\cdot,\cdot\rangle$ the corresponding duality pairing. The dual norm is defined by $\|\ell\|_{W^*}:=\sup_{\|v\|_W=1}\ell(v)$.
The preconditioned residual operator $\R:W\to W$ is defined by solving the following transport problem without scattering,
\begin{equation*}
    t(\R(w), v) = \duality{\tilde{\R}(w)}{v}, \quad \forall v \in W.
\end{equation*}
Using the arguments used to analyze \cref{eq:weak_formulation} in \Cref{sec:variational_formulation}, one  shows that the operator $\R$ is well-defined.
The following result ensures that the corresponding preconditioned residuals decay monotonically, which verifies \Cref{lem:contractivity_residuals}.
\begin{lemma}
    \label{lem:contractivity_residuals_operator}
    Let $u_k, u_{k+1/2}\in W$ be related via \cref{eq:iteration_equation_bilinear}. Then it holds that
    \begin{align*}
        \normM{\R(u_{k+1/2})}\leq \rho \normM{\R(u_k)}.
    \end{align*}
\end{lemma}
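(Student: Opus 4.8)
The plan is to mimic the proof of \Cref{lem:contractivity_errors}, but working at the level of the preconditioned residuals rather than the errors. First I would derive an equation relating $\R(u_{k+1/2})$ and $\R(u_k)$. Recall that $\R(w)$ solves $t(\R(w),v)=\duality{\tilde\R(w)}{v}=\ell(v)-(t(w,v)-s(w,v))$ for all $v\in W$. Subtracting this identity for $w=u_{k+1/2}$ from the one for $w=u_k$, the $\ell(v)$ terms cancel and we obtain
\begin{equation*}
  t(\R(u_k)-\R(u_{k+1/2}),v) = t(u_{k+1/2}-u_k,v) - s(u_{k+1/2}-u_k,v), \quad \forall v\in W.
\end{equation*}
The key observation is that, by the defining relation \cref{eq:iteration_equation_bilinear}, $t(u_{k+1/2},v)=s(u_k,v)+\ell(v)$, and since $u_{k+1/2}-u_k = \R(u_k)$ (which is exactly the statement $t(\R(u_k),v)=\ell(v)-t(u_k,v)+s(u_k,v)=t(u_{k+1/2},v)-t(u_k,v)$, so $\R(u_k)=u_{k+1/2}-u_k$ by well-posedness of the transport problem), the term $t(u_{k+1/2}-u_k,v)$ equals $t(\R(u_k),v)$. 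Plugging this in, the $t(\R(u_k),v)$ on both sides cancels, leaving the clean identity
\begin{equation*}
  -t(\R(u_{k+1/2}),v) = -s(\R(u_k),v), \qquad\text{i.e.}\qquad t(\R(u_{k+1/2}),v) = s(\R(u_k),v), \quad \forall v\in W.
\end{equation*}

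With this identity in hand, the rest is identical to \Cref{lem:contractivity_errors}: set $v=\R(u_{k+1/2})$, apply coercivity \cref{eq:pos_t} on the left and continuity \cref{eq:cont_s} on the right to get
\begin{equation*}
  \normM{\R(u_{k+1/2})}^2 \leq t(\R(u_{k+1/2}),\R(u_{k+1/2})) = s(\R(u_k),\R(u_{k+1/2})) \leq \rho\,\normM{\R(u_k)}\,\normM{\R(u_{k+1/2})},
\end{equation*}
and divide by $\normM{\R(u_{k+1/2})}$ (the case where this vanishes being trivial).

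The main obstacle, and the only genuinely new ingredient compared to \Cref{lem:contractivity_errors}, is establishing carefully that $\R(u_k)=u_{k+1/2}-u_k$ at the variational level — i.e. verifying that $u_{k+1/2}-u_k\in W$ solves the same transport problem that defines $\R(u_k)$, and invoking the well-posedness of that problem (noted just before \Cref{lem:contractivity_residuals_operator}) to conclude equality. This is what makes the $t(\R(u_k),v)$ contributions cancel and reduces everything to the already-proven coercivity/continuity pair of \Cref{lem:properties_s_t}. One should also double-check that $u_{k+1/2}\in W$ so that $\R(u_{k+1/2})$ is well-defined, which follows from the well-posedness result for \cref{eq:iteration_equation_bilinear}. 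Everything else is a verbatim repetition of the energy estimate argument.
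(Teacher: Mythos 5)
Your proof is correct and follows essentially the same approach as the paper: both establish the key identity $t(\R(u_{k+1/2}),v)=s(\R(u_k),v)$ together with $\R(u_k)=u_{k+1/2}-u_k$, and then conclude by the coercivity/continuity argument of \Cref{lem:properties_s_t}. The only (cosmetic) difference is that you derive these identities directly from the iteration equation \cref{eq:iteration_equation_bilinear}, whereas the paper routes the computation through the exact solution $u$ and the error equation \cref{eq:half_step_error_equation}.
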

\begin{proof}
    Denoting $r_k=\R(u_k)$ and using \cref{eq:half_step_error_equation}, we observe that
    \begin{align*}
        t(r_k, v) & = \duality{\tilde{\R}(u_k)}{v} = \ell(v) - (t(u_k, v) - s(u_k, v))\\
        & = t(e_k,v) - s(e_k, v) = t(e_k,v) - t(e_{k+1/2}, v) \\ & = t(u_{k+1/2}-u_k, v),
    \end{align*}
    for all $v\in W$, i.e., $r_k=u_{k+1/2}-u_k$.
    Using a similar argument and \cref{eq:half_step_error_equation}, we further obtain that
    \begin{align*}
        t(r_{k+1/2}, v) & = \duality{\tilde{\R}(u_{k+1/2})}{v} = \ell(v) - (t(u_{k+1/2}, v) - s(u_{k+1/2}, v) ) \\
        & = t(e_{k+1/2},v) - s(e_{k+1/2}, v) = s(e_k,v) - s(e_{k+1/2}, v) \\ & = s(u_{k+1/2} - u_k, v) = s(r_k, v),
    \end{align*}
    for all $v\in W$. In view of the proof of \Cref{lem:contractivity_errors}, the proof is complete.
\end{proof}
The following result allows us to relate the error to the residual quantitatively.
\begin{lemma}
    \label{lem:upper_bound_error_residual}
    Let $u\in W$ be the solution to \cref{eq:weak_formulation}. Then the following estimate holds
    \begin{align*}
        \normM{u-w} \leq \frac{1}{1-\rho} \normM{\R(w)}\qquad\forall w\in W.
    \end{align*}
\end{lemma}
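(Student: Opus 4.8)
The plan is to exploit the identity $\R(w) = u_{w,1/2} - w$, where $u_{w,1/2}$ denotes the source-iteration update applied to $w$, together with the error-contraction estimate of \Cref{lem:contractivity_errors}. Concretely, for fixed $w \in W$ let $u_{w,1/2}$ solve $t(u_{w,1/2},v) = s(w,v) + \ell(v)$ for all $v \in W$; the computation in the proof of \Cref{lem:contractivity_residuals_operator} shows that $\R(w) = u_{w,1/2} - w$. The key observation is then a triangle-type split of the error:
\begin{align*}
    \normM{u - w} \leq \normM{u - u_{w,1/2}} + \normM{u_{w,1/2} - w} = \normM{u - u_{w,1/2}} + \normM{\R(w)}.
\end{align*}

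Next I would bound the first term on the right by applying \Cref{lem:contractivity_errors} with the roles $u_k \mapsto w$ and $u_{k+1/2} \mapsto u_{w,1/2}$ (the hypothesis of that lemma is exactly that $w$ and $u_{w,1/2}$ are related by the source iteration, which holds by construction). This yields $\normM{u - u_{w,1/2}} \leq \rho\,\normM{u - w}$, and hence
\begin{align*}
    \normM{u - w} \leq \rho\,\normM{u - w} + \normM{\R(w)}.
\end{align*}
Since $\rho < 1$ by the absorption assumption, the term $\rho\,\normM{u-w}$ can be absorbed into the left-hand side, giving $(1-\rho)\normM{u-w} \leq \normM{\R(w)}$, which is the claimed estimate after dividing by $1-\rho$.

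The only subtlety I anticipate is making sure the quantities are finite so that the absorption step is legitimate: one needs $\normM{u-w} < \infty$, which holds because $u, w \in W \subset V_0$ and $\sigma_t$ is essentially bounded, so $\normM{\cdot}$ is finite on all of $W$. One should also note that $u_{w,1/2} \in W$ is well-defined by the same well-posedness argument used for \cref{eq:weak_formulation} in \Cref{sec:variational_formulation}, so that $\R(w) \in W$ and the triangle inequality applies in the Hilbert space $(V_0, \normM{\cdot})$. No genuine obstacle is expected here — the result is essentially a reformulation of the contraction property, and the "hard part'' is merely the bookkeeping of recognizing $\R(w)$ as a source-iteration increment, which has already been done in the preceding lemma.
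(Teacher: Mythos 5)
Your proof is correct and is essentially the paper's argument in different notation: the paper works with the affine map $\R_0 v = \R(w+v)-\R(w)$ and bounds $\normM{v}\leq\normM{v+\R_0 v}+\normM{\R_0 v}$ with $\normM{v+\R_0 v}\leq\rho\normM{v}$, and for $v=u-w$ the quantity $v+\R_0 v$ is precisely your $u-u_{w,1/2}$, so the triangle-inequality-plus-contraction-plus-absorption structure is identical. The only cosmetic difference is that you invoke \Cref{lem:contractivity_errors} by name where the paper re-derives the contraction inline via \cref{eq:pos_t} and \cref{eq:cont_s}.
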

\begin{proof}
    Let us introduce the linear operator $\R_0:W\to W$ defined by $\R_0 w:=\R(w)-L$, where $L\in W$ is defined by the relation $t(L,v)=\ell(v)$, for $v\in W$.
    We observe that for $v\in W$, inequality \cref{eq:pos_t}, the definition of $\R$ and \cref{eq:cont_s} imply that
    \begin{align*}
        \normM{\R_0 v + v}^2 \leq t( \R_0 v + v,\R_0v + v) = s(v,\R_0 v +v)\leq \rho\normM{v}\normM{\R_0 v +v}.
    \end{align*}
    The triangle inequality thus implies that $\normM{v} \leq \normM{v+\R_0v} + \normM{\R_0v} \leq \rho\normM{v}+ \normM{\R_0 v}$, i.e., 
    \begin{align}
        \label{eq:equivalence_res_error}
        \normM{v}\leq \normM{\R_0 v}/(1-\rho)\qquad \forall v\in W.
    \end{align}
    The assertion follows from \cref{eq:equivalence_res_error} with $v=u-w$ and the observation that $\R_0(u-w)=-\R(w)$.
\end{proof}

\section{Residual minimization}
\label{sec:Residual_minimization}
In view of \Cref{lem:upper_bound_error_residual} the difference between the weak solution $u$ of \cref{eq:weak_formulation} and any element $w\in W$ is bounded from above by the norm of the residual $\R(w)$ associated with $w$. Hence, if we can construct $w$ such that $\R(w)=0$, then $u=w$.
\Cref{lem:contractivity_residuals_operator} shows that the half-step \cref{eq:half_step_error_equation} reduces the norm of the residual by the factor $\rho$.
These observations motivate us to modify $u_{k+1/2}$ such that the corresponding residual becomes smaller. 
In order to obtain a feasible minimization problem, let $W_{N}\subset W$ be a subspace of finite dimension $N$.
We then compute the modification $u^c_{k+1/2}\in W_{N}$ such that
\begin{align}
    \label{eq:minimization}
    u^c_{k+1/2} := \underset{w \in W_{N}}{\mathrm{argmin}} \normM{\R(u_{k+1/2} + w)}^2.
\end{align}
The new iterate of the scheme is then defined as
\begin{align}
    \label{eq:update}
    u_{k+1} \colonequals u_{k+1/2}+ u^c_{k+1/2},
\end{align}
and the procedure can restart. 

\begin{lemma}
    \label{lem:minimization}
    The minimization problem in \cref{eq:minimization} has a unique solution $u_{k+1/2}^c\in W_{N}$.
\end{lemma}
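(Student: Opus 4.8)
The plan is to recognize the minimization problem in \cref{eq:minimization} as a standard least-squares problem in the Hilbert space $W$ equipped with the weighted inner product $\inner{\sqrt{\sigma_t}\,\cdot}{\sqrt{\sigma_t}\,\cdot}$, and to exploit the affine-linear structure of the map $w\mapsto\R(u_{k+1/2}+w)$. First I would observe that $\R:W\to W$ is affine: writing $\R_0 w = \R(w)-L$ as in the proof of \Cref{lem:upper_bound_error_residual}, where $L\in W$ solves $t(L,v)=\ell(v)$, we have $\R(w)=\R_0 w + L$ with $\R_0$ linear (and bounded, since it is the composition of the bounded operators $w\mapsto \tilde\R_0(w)$ and the inverse of the transport form $t$, which is bounded by coercivity \cref{eq:pos_t}). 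Hence
\begin{align*}
    \R(u_{k+1/2}+w) = \R_0 w + \big(\R_0 u_{k+1/2} + L\big) = \R_0 w + \R(u_{k+1/2}),
\end{align*}
so the functional to be minimized is $J(w):=\normM{\R_0 w + g}^2$ with fixed data $g:=\R(u_{k+1/2})\in W$, a quadratic functional on the finite-dimensional space $W_N$.

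Next I would set up the normal equations. Since $W_N$ is finite-dimensional, $J$ restricted to $W_N$ is a continuous, coercive (in the relevant sense), convex quadratic, so a minimizer exists; uniqueness is the only real point. The first-order optimality condition for $u^c_{k+1/2}$ reads
\begin{align*}
    \inner{\sqrt{\sigma_t}\,(\R_0 u^c_{k+1/2} + g)}{\sqrt{\sigma_t}\,\R_0 w} = 0 \qquad \forall w\in W_N,
\end{align*}
i.e., $a(u^c_{k+1/2},w) = F(w)$ for all $w\in W_N$, where $a(v,w):=\inner{\sqrt{\sigma_t}\,\R_0 v}{\sqrt{\sigma_t}\,\R_0 w}$ is a symmetric nonnegative bilinear form on $W_N$ and $F(w):=-\inner{\sqrt{\sigma_t}\,g}{\sqrt{\sigma_t}\,\R_0 w}$. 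By the Lax–Milgram / finite-dimensional linear algebra argument, existence and uniqueness of the solution hold as soon as $a$ is positive definite on $W_N$, which amounts to showing that $\R_0$ is injective on $W_N$ — equivalently, that $\R_0 w = 0$ for $w\in W_N$ forces $w=0$.

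The key step — and the main (mild) obstacle — is therefore this injectivity of $\R_0$ on all of $W$, not just $W_N$. This follows from \cref{eq:equivalence_res_error} established inside the proof of \Cref{lem:upper_bound_error_residual}: there it is shown that $\normM{v}\le \normM{\R_0 v}/(1-\rho)$ for every $v\in W$. Since $\rho<1$, this inequality immediately gives that $\R_0 v = 0$ implies $\normM{v}=0$, hence $v=0$ because $\sigma_t$ is bounded below by a positive constant. Consequently $a(w,w)=\normM{\R_0 w}^2 \ge (1-\rho)^2\normM{w}^2 > 0$ for any nonzero $w\in W_N$, so $a$ is coercive on the finite-dimensional space $W_N$ with respect to $\normM{\cdot}$ (all norms on $W_N$ being equivalent), and the normal equations have a unique solution. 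I would close by noting that the same estimate shows $J$ is strictly convex on $W_N$, giving uniqueness of the argmin directly; writing the proof via strict convexity or via the coercive normal equations is a matter of taste, and I would choose whichever is shortest given what has already been proved. The only thing to be slightly careful about is making explicit that $\R_0$ (equivalently $\R$) maps into $W$ and is bounded, so that the inner products above make sense; this is already implicit in the well-definedness of $\R$ noted just before \Cref{lem:contractivity_residuals_operator}.
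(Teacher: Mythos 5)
Your proposal is correct and follows essentially the same route as the paper: decompose $\R(u_{k+1/2}+w)=\R(u_{k+1/2})+\R_0 w$, pass to the normal equations on $W_N$, and invoke the injectivity estimate \cref{eq:equivalence_res_error} to get positive definiteness, hence uniqueness and (by finite dimensionality) existence. Your write-up merely spells out the coercivity bound $\normM{\R_0 w}\geq(1-\rho)\normM{w}$ that the paper leaves implicit in its citation of \cref{eq:equivalence_res_error}.
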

\begin{proof}
    Since $\R(u_{k+1/2} + w)=\R(u_{k+1/2}) + \R_0 w$ with $\R_0$ as defined in the proof of \Cref{lem:upper_bound_error_residual}, the optimization problem \cref{eq:minimization} is a least-squares problem for the norm $\normM{\cdot}^2$ with linear operator $\R_0$ and data $-\R(u_{k+1/2})$. A necessary condition for a minimizer $w^*\in W_{N}$ is therefore
    \begin{align}
        \label{eq:necessary}
        (\R_0 w^*,\R_0 v)_{\sigma_t} = -(\R(u_{k+1/2}),\R_0 v)_{\sigma_t} \quad\forall v\in W_{N}.
    \end{align}
    In view of \cref{eq:equivalence_res_error}, the minimizer $u_{k+1/2}^c=w^*$ is unique, and hence exists, because $W_{N}$ is finite dimensional and \cref{eq:necessary} is a quadratic problem.
\end{proof}
\textbf{Proof of \Cref{thm:main_result}.}
Using \Cref{lem:upper_bound_error_residual},
the minimization property of $u_{k+1}$, and \Cref{lem:contractivity_residuals_operator} we obtain that
\begin{align*}
    (1-\rho)\normM{e_{k+1}}\leq \normM{\R(u_{k+1})}\leq \rho^{k+1}\normM{\R(u_0)},
\end{align*}
which concludes the proof.

\section{Numerical realization}
\label{sec:numerical_realization}
The convergent iteration in infinite-dimensional Hilbert spaces described in the previous sections serves as a blueprint for constructing numerical methods.
The variational character of the scheme allows translating the infinite-dimensional iteration directly to a corresponding convergent iteration in finite-dimensional approximation spaces $\Wh$ of $W$.

\subsection{Galerkin approximation}
To be specific, we employ for $\Wh$ a construction as in \cite{DPS22}. Let $\TRh$ and $\TSh$ be shape regular, quasi-uniform and conforming triangulations of $R$ and $S$, respectively. Here, $h>0$ denotes a mesh-size parameter. In addition, we require that $-K^S\in\TSh$ for any $K^S\in \TSh$ in order to be able to properly handle even and odd functions.
We then denote by $\Sh^\pm$ the corresponding finite element spaces of even piecewise constant (+) and odd piecewise linear (--) functions associated with $\TSh$.
Similarly, we denote by $\Xh^\pm$ the finite element spaces consisting of piecewise constant (--) and continuous piecewise linear (+) functions associated with $\TRh$.
Please note that we use the symbol $\pm$ in $\Xh^\pm$ for notational convenience and not to indicate whether a function of the spatial variable is even or odd.
Our considered approximation space is then defined as $\Wh=\Sh^+\otimes\Xh^+ + \Sh^-\otimes\Xh^-$.
The Galerkin approximation of \cref{eq:weak_formulation} reads: Find $u_h\in \Wh$ such that
\begin{equation}
    \label{eq:weak_formulation_discrete}
    t(u_h,v_h) = s(u_h,v_h) + \ell(v_h)\quad\forall v_h\in \Wh.
\end{equation}
As shown in \cite{ES12}, \cref{eq:weak_formulation_discrete} has a unique solution $u_h\in \Wh$ that is uniformly (in $h$) bounded by $\|\ell\|_{W^*}$. Moreover, there is a constant $C>0$ independent of the discretization parameters such that
\begin{align*}
    \|u-u_h\|_W \leq C \inf_{v_h\in\Wh} \|u-v_h\|_{W},
\end{align*}
i.e., $u_h$ is a quasi-best approximation to $u$ in $\Wh$.

\subsection{Iterative scheme}
The discretization of \cref{eq:iteration_equation_bilinear}, \cref{eq:update} becomes:
Given $u_{h,k}\in \Wh$, compute $u_{h,k+1/2}\in \Wh$ such that
\begin{equation}
    \label{eq:iteration_equation_bilinear_discrete}
    t(u_{h,k+1/2}, v_h) = s(u_{h,k}, v_h) + \ell(v_h), \quad \forall v_h\in \Wh.
\end{equation}
The discretization of the preconditioned residual operator is $\Rh:\Wh\to\Wh$ defined via
\begin{equation*}
    t(\R_h(w_h), v_h) =\ell(v_h)-(t(w_h,v_h)-s(w_h,v_h)), \quad \forall v_h \in \Wh,
\end{equation*}
and the corresponding corrections are computed via the minimization problem
\begin{align}
    \label{eq:minimization_discrete}
    u^c_{h,k+1/2} := \underset{w_h \in W_{h,N}}{\mathrm{argmin}} \normM{\Rh(u_{h,k+1/2} + w_h)}^2,
\end{align}
where $\WhN\subset \Wh$.
The new iterate of the discrete scheme is then defined accordingly by
\begin{align}
    \label{eq:update_discrete}
    u_{h,k+1} \colonequals u_{h,k+1/2}+ u^c_{h,k+1/2}.
\end{align}
Repeating the arguments of \Cref{sec:Contraction_properties_SI} and \Cref{sec:Residual_minimization}, we obtain the following convergence statement.
\begin{theorem}
    \label{thm:convergence_discrete}
    For any $u_{h,0}\in\Wh$, the sequence $\{u_{h,k}\}$ defined by \cref{eq:iteration_equation_bilinear_discrete}, \cref{eq:update_discrete} converges linearly to the solution $u_h$ of \cref{eq:weak_formulation_discrete}, i.e.,
    \begin{align*}
        \normM{u_h-u_{h,k}} \leq \frac{\rho^k}{1-\rho}\normM{\Rh(u_{h,0})}.
    \end{align*}
    Moreover, the residuals converge monotonically, i.e., $\normM{\Rh(u_{h,k+1})}\leq \rho\normM{\Rh(u_{h,k})}$.
\end{theorem}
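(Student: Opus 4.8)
The plan is to mirror, line by line, the infinite-dimensional argument from \Cref{sec:Contraction_properties_SI} and \Cref{sec:Residual_minimization}, observing that every ingredient used there survives under Galerkin projection because $\Wh\subset W$ and the forms $t,s,\ell$ are simply restricted. First I would record the discrete analogues of the auxiliary results: since $\Wh\subset W$, the inequalities of \Cref{lem:properties_s_t}, namely $\normM{v_h}^2\le t(v_h,v_h)$ and $s(v_h,w_h)\le\rho\normM{v_h}\normM{w_h}$, hold verbatim for all $v_h,w_h\in\Wh$. Well-posedness of \cref{eq:weak_formulation_discrete} and of the discrete transport problem defining $\Rh$ follows exactly as for the continuous problem (this is already cited from \cite{ES12}). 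I would then introduce the discrete error $e_{h,k}:=u_h-u_{h,k}$ and subtract \cref{eq:iteration_equation_bilinear_discrete} from \cref{eq:weak_formulation_discrete} to get the discrete half-step error equation $t(e_{h,k+1/2},v_h)=s(e_{h,k},v_h)$ for all $v_h\in\Wh$; testing with $v_h=e_{h,k+1/2}$ and using the two discrete inequalities gives $\normM{e_{h,k+1/2}}\le\rho\normM{e_{h,k}}$, the discrete version of \Cref{lem:contractivity_errors}.

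Next I would establish the discrete residual identities. Exactly as in the proof of \Cref{lem:contractivity_residuals_operator}, one checks $\Rh(u_{h,k})=u_{h,k+1/2}-u_{h,k}$ and $t(\Rh(u_{h,k+1/2}),v_h)=s(\Rh(u_{h,k}),v_h)$ for all $v_h\in\Wh$, so that the same test-function argument yields $\normM{\Rh(u_{h,k+1/2})}\le\rho\normM{\Rh(u_{h,k})}$. I would then reproduce \Cref{lem:upper_bound_error_residual} by defining the affine-linear discrete residual operator $\R_{h,0}:\Wh\to\Wh$ via $\R_{h,0}w_h:=\Rh(w_h)-L_h$, where $t(L_h,v_h)=\ell(v_h)$ for all $v_h\in\Wh$; the estimate $\normM{v_h}\le\normM{\R_{h,0}v_h}/(1-\rho)$ follows from the very same two-line computation, giving $\normM{u_h-w_h}\le\frac{1}{1-\rho}\normM{\Rh(w_h)}$ for all $w_h\in\Wh$. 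Well-posedness of the discrete minimization \cref{eq:minimization_discrete} over $\WhN\subset\Wh$ then follows as in \Cref{lem:minimization}, since $\R_{h,0}$ is injective by the estimate just derived.

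Finally, I would close the loop as in the proof of \Cref{thm:main_result}: because $0\in\WhN$, the minimization property gives $\normM{\Rh(u_{h,k+1})}\le\normM{\Rh(u_{h,k+1/2})}$, which combined with the discrete monotonicity $\normM{\Rh(u_{h,k+1/2})}\le\rho\normM{\Rh(u_{h,k})}$ yields both the monotone decay $\normM{\Rh(u_{h,k+1})}\le\rho\normM{\Rh(u_{h,k})}$ and, by induction, $\normM{\Rh(u_{h,k})}\le\rho^{k}\normM{\Rh(u_{h,0})}$; applying the discrete error-residual bound with $w_h=u_{h,k}$ then produces $\normM{u_h-u_{h,k}}\le\frac{\rho^k}{1-\rho}\normM{\Rh(u_{h,0})}$.

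Honestly there is no real obstacle here: the only thing to be careful about is that $\Wh$ is a genuine subspace of $W$ (it is, by construction $\Wh=\Sh^+\otimes\Xh^+ + \Sh^-\otimes\Xh^-\subset V_1^+\oplus V_0^-=W$), so that the coercivity-type and continuity bounds transfer without any discrete inf-sup condition being needed; the positivity $\normM{v_h}^2\le t(v_h,v_h)$ is inherited for free precisely because it comes from the diagonal term $\inner{\sigma_t v_h}{v_h}$ and the skew-symmetric transport part drops out on the diagonal. Thus the proof is a verbatim repetition of the earlier arguments with every $W$ replaced by $\Wh$ and every $W_N$ by $\WhN$, which is exactly what the statement "Repeating the arguments of \Cref{sec:Contraction_properties_SI} and \Cref{sec:Residual_minimization}" promises.
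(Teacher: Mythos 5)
Your proposal is correct and is exactly the argument the paper intends: the paper's own ``proof'' consists of the single remark that the arguments of \Cref{sec:Contraction_properties_SI} and \Cref{sec:Residual_minimization} can be repeated on $\Wh\subset W$, and you have filled in precisely those steps (inheritance of \cref{eq:pos_t}--\cref{eq:cont_s} on the subspace, the discrete half-step error equation, the residual identities, the discrete analogue of \Cref{lem:upper_bound_error_residual}, and the minimization over $\WhN\ni 0$). No gaps.
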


\subsection{Formulation in terms of matrices}
Choosing basis functions for $\Sh^\pm$ and $\Xh^\pm$ allows us to rewrite the iteration $u_{h,k}\mapsto u_{h,k+1}$ in corresponding coordinates.
Denote $\{\varphi_i\}_{i=1}^{\nre}$ and $\{\chi_j\}_{j=1}^{\nro}$ the usual basis functions with local support of $\Xh^+$ and $\Xh^-$, i.e., $\varphi_i$ vanishes in all vertices of $\TRh$ except in the $i$th one, while $\chi_j$ vanishes in all elements of $\TRh$ except in the $j$th one. Similarly, we denote $\{\mu_k\}$ the basis of $\Sh^+$ such that $\mu_k$ vanishes in all elements of $\TSh$ except in $K^S_k$ and $-K^S_k$. Eventually, we denote by $\{\psi_l\}_{l=1}^{\nso}$ the basis of $\Sh^-$ such that $\psi_l$ vanishes in all vertices belonging to $\TSh$ except for the vertices $p_l$ and $-p_l$.
We may then write the even and odd parts of $u_h$ as 
\begin{align*}
    u_h^+ = \sum_{i=1}^{\nre}\sum_{{k=1}}^{\nse} \uu_{i,k}^+ \varphi_i \mu_k,\qquad 
    u_h^- = \sum_{j=1}^{\nro}\sum_{{l=1}}^{\nso} \uu_{j,l}^- \chi_j \psi_k,
\end{align*}
and \cref{eq:weak_formulation_discrete} turns into the linear system
\begin{align}
\label{eq:RTE_discrete}
	\TT\uu=\SS\uu +\ll
\end{align}
with matrices $\TT$ and $\SS$ having the following block structure,
\begin{align}
    \TT := \begin{bmatrix} \BB +\MMe & -\AA^T \\ \AA & \MMo\end{bmatrix},\qquad
	\SS:=\begin{bmatrix}\SSe &\\& \SSo\end{bmatrix},\qquad \ll:=\begin{bmatrix}\ll^+ \\ \ll^-\end{bmatrix}.
\end{align}
The individual blocks are given as follows:
\begin{alignat*}{6}
    \SSe &:= \bThetae\otimes \bMsce, &&\qquad& \SSo &:= \bThetao\otimes \bMsco,\\
    \MMe &:= \bMSpheree\otimes \bMtre, &&\qquad& \MMo &:= \bMSphereo\otimes \bMtro,\\
    \AA &:= \sum_{i=1}^d \bASphere_i\otimes\DD_i,&&\qquad& \BB &:= {\rm blkdiag}(\BB_1,\ldots,\BB_{\nse}),
\end{alignat*}
with matrices
\begin{alignat*}{6}
   (\bMsce)_{i,i'} &:= \int_R \sigma_t \varphi_i \varphi_{i'}\,d\rr,
   &&\qquad& (\bThetae)_{k,k'} &:= \int_S\int_S \theta(\ss\cdot\ss') \mu_k(\ss') \mu_{k'}(\ss)\,d\ss'\,d\ss,\\
    (\bMsco)_{j,j'} &:= \int_R \sigma_t \chi_j \chi_{j'}\,d\rr,
    &&\qquad& (\bThetao)_{l,l'} &:= \int_S\int_S \theta(\ss\cdot\ss') \psi_l(\ss') \psi_{l'}(\ss)\,d\ss'\,d\ss,\\
    (\DD_n)_{i,k} &:= \int_R \frac{\d \varphi_i}{\d r_n} \chi_k\,d\rr, &&\qquad&
    (\bASphere_i)_{l,k} &:= \int_S \ss_i \psi_l \mu_k\,d\ss,
    \\
    (\BB_k)_{i,i'}&:=\int_{\d R} \varphi_i\varphi_{i'}\omega_k\,d\rr, &&\qquad& \omega_k &:= \int_S |\ss\cdot\nn| (\mu_k)^2\,d\ss,\\
    (\bMSpheree)_{k,k'}&:=\int_S \mu_k \mu_{k'}\,d\ss, &&\qquad&(\bMSphereo)_{l,l'}&:=\int_S \psi_l \psi_{l'}\,d\ss.
\end{alignat*}
The vectors $\ll^\pm$ are obtained from inserting basis functions into the linear functional $\ell$.
We mention that all matrices are sparse, except $\bThetae$ and $\bThetao$, which can be applied efficiently using hierarchical matrix compression, see \cite{DPS22} -- for moderate $\nse$, $\nso$ dense linear algebra is efficient, too.
In particular, the matrices $\bMsco$ and $\bMSphereo$ are diagonal and $3\times 3$ block diagonal, respectively, i.e., $\MMo$ can be inverted efficiently. 
Using these matrices, \cref{eq:iteration_equation_bilinear_discrete} turns into the linear system
\begin{align}
    \label{eq:iteration_equation_bilinear_matrix}
    \TT \uu_{k+1/2} = \SS \uu_k + \ll,
\end{align}
which can be solved as described in \Cref{rem:solve_system}.
Denote $\RR(\ww)$ the coordinate vector of $\Rh(w)$. Then $\RR(\ww)$ is determined by solving
\begin{align*}
	\TT\RR(\ww)=\ll-(\TT-\SS)\ww.
\end{align*}
Hence, the operator ${\Rh}_0$ discretizing $\R_0$ becomes $\RR_0\ww=\RR(\ww)-\RR(0)$.
The update is computed via the minimization \cref{eq:minimization_discrete}, which becomes, cf. \cref{eq:necessary},
\begin{align}
    \label{eq:necessary_discrete}
    (\RR_0^N)^T \MM \RR_0^N \ww^* = - (\RR_0^N)^T \MM \RR(\uu_{k+1/2}),
\end{align}
where $\MM={\rm blkdiag}(\MM^+,\MM^-)$.
Here, $\RR_0^N=\RR_0\WW_{h,N}$, where $\WW_{h,N}$ is a matrix, whose columns correspond to the coordinates of a basis for $\WhN$.
Depending on the conditioning of the matrix $\WW_{h,N}$, the system in \cref{eq:necessary_discrete} might be ill-conditioned. To stabilize the solution process, we compute the minimum-norm solution.
The coordinate vector for the correction $u_{h,k+1/2}^c$ is then given by  $\uu_{k+1/2}^c = \WW_{h,N} \ww^*$, which gives the following update formula for the coordinates of the new iterate
\begin{align}
    \label{eq:new_iterate_discrete}
    \uu_{k+1}= \uu_{k+1/2} + \uu_{k+1/2}^c.
\end{align}
The residual corresponding to $u_{h,k+1}$ can be updated according to
$\rr_{k+1}=\RR \uu_{k+1/2}+ \RR_0^N \ww^*$.
\Cref{thm:convergence_discrete} ensures that $\uu_{k}$ converges linearly to the solution $\uu$ of \cref{eq:RTE_discrete}.

\begin{remark}
\label{rem:solve_system}
Solving for $\uu_{k+1/2}$ can be done in two steps. First, one may solve the symmetric positive definite system
\begin{align}
    \label{eq:even_parity}
    (\AA^T (\MM^-)^{-1}\AA + \MM^+ + \BB) \uu_{k+1/2}^+ = \SSe \uu_{k}^+ + \ll^+ + \AA^T (\MM^-)^{-1}( \ll^- + \SSo \uu_k^{-}).
\end{align}
The system in \cref{eq:even_parity} is block diagonal with $\nse$ many sparse blocks of size $\nre\times\nre$ and can be solved in parallel, with straightforward parallelization over each element of $\TSh$.
Second, one may then retrieve the odd part by solving the system
\begin{align*}
    \MM^- \uu_{k+1/2}^- = \SSo \uu_{k}^+ - \AA \uu_{k+1/2}^+ +\ll^-,
\end{align*}
which can be accomplished with linear complexity due to the structure of $\MMo$.
\end{remark}
\begin{remark}\label{rem:cost_minimization}
    Assuming that the dimension of $W_{h,N}$ is $N$, \cref{eq:necessary_discrete} is a dense $N\times N$ system in general, which can be solved at negligible cost for small $N$. The assembly of the corresponding linear system, i.e., of the matrix $\RR_{0}^N$ and the right-hand side $\RR(\uu_{k+1/2})=\RR_0(\uu_{k+1/2})+\RR(0)$, requires $N+1$ applications of $\RR_0$, which can be carried out as described in \Cref{rem:solve_system}.
    Since we can compute $\uu_{k+3/2}=\uu_{k+1}+\rr_{k+1}$ we can skip the solution of a linear system in \cref{eq:iteration_equation_bilinear_matrix} for $k>0$. 
    Hence, the computational cost for performing one step of the proposed method \cref{eq:iteration_equation_bilinear_matrix}, \cref{eq:new_iterate_discrete} is comparable to $N+1$ steps of a corresponding discretization of the unmodified iteration \cref{eq:SI}, plus the computational cost for setting up $W_{h,N}$. 
    Thus, assuming that the cost for setting up $W_{h,N}$ is minor, the extra cost for the minimization by solving \cref{eq:necessary_discrete} is justified as long as the iteration \cref{eq:iteration_equation_bilinear_matrix}, \cref{eq:new_iterate_discrete} converges faster than $\rho^{N+1}$.
\end{remark}

\subsection{Choice of subspaces}
The previous consideration did not depend on a particular choice of the minimization space $W_{h,N}$.
As discussed in \Cref{sec:Related_works}, a common choice for the considered radiative transfer problem is to use corrections derived from asymptotic analysis, i.e., related diffusion problems.
As has been observed in \cite{PS20,DPS22}, the corresponding discretizations of such diffusion problems can be understood as projections on constant functions in $\ss$. Such functions, in turn, can be interpreted as eigenvectors of the matrix $\bThetae$. 
Indeed, the spherical harmonics are the eigenfunctions of the integral operator in \cref{eq:RTE} and the lowest-order spherical harmonic is constant.

\subsubsection{\texorpdfstring{Constructing $W_{h,N}$ using eigenfunctions of $\bThetae$}{Constructing WhN using eigenfunctions of Thetae}}
Let $\HH_{k}\neq 0$ solve the generalized eigenvalue problem
\begin{align*}
    \bThetae \HH_k^+ = \gamma_{k} \bMSpheree \HH_k^+,
\end{align*}
for some $\gamma_k\geq 0$, where we suppose that the eigenvalues are ordered non-increasingly, i.e., $\gamma_{k}\geq \gamma_{k+1}$. We denote $H_{h,k}^+\in \Sh^+$ the corresponding (even) eigenfunctions, and define the space
\begin{align*}
    H_{K}^+ := \mathrm{span}\{ H_{h,k}^+: 1\leq k\leq K\}.
\end{align*}
We further define $H_{h,k,i}^-\in \Sh^-$ by the relation
\begin{align*}
    (H_{h,k,i}^- ,\psi_l) =(\ss_i H_{h,k}^+,\psi_l)\quad\text{for all } l=1,\ldots,\nso,\quad 1\leq i\leq d,
\end{align*}
and $H_K^- =\mathrm{span}\{H_{h,k,i}^-: 1\leq k\leq K, \ 1\leq i\leq d\}$.
Using these definitions, we can define the space 
\begin{align}
    \label{def:YK}
    Y_{h,K} := H_K^+\otimes \Xh^+ + H_K^-\otimes \Xh^-\subset W_h.
\end{align}
To derive a correction equation, we rewrite \cref{eq:half_step_error_equation} as follows
\begin{align}
    \label{eq:half_step_error_equation_discrete}
    t( u_h -u_{h,k+1/2},v_h) = s( u_h -u_{h,k+1/2},v_h)+ s(u_{h,k+1/2}-u_{h,k},v_h)\quad\forall v_h\in \Wh.
\end{align}
Similar to \cite{PS20,DPS22}, but see also \cite{AL02}, we may expect to obtain a good approximation to the error $u_h-u_{h,k+1/2}$ by solving \cref{eq:half_step_error_equation_discrete} on the subspace $Y_{h,K}$, for a certain $K$. We hence define $u_{h,k+1/2}^c\in Y_{h,K}$ as the unique solution of
\begin{align}
    \label{eq:corr_half_step_error_equation_discrete}
    t(u_{h,k+1/2}^c,v_h) = s( u_{h,k+1/2}^c,v_h)+ s(u_{h,k+1/2}-u_{h,k},v_h)\quad\forall v_h\in Y_{h,K}.
\end{align}
By construction, the space $Y_{h,K}$ satisfies the compatibility condition $\ss\cdot\nabla y_h^+\in Y_{h,K}^-=H_K^-\otimes \Xh^-$ for any $y_h^+\in Y_{h,K}^+$. Therefore, \cref{eq:corr_half_step_error_equation_discrete} has a unique solution \cite{ES12}.
If $K$ is moderately small, \cref{eq:corr_half_step_error_equation_discrete} can be solved efficiently, see \cite{DPS22} for a discussion.
We then define
\begin{align}
    \label{eq:W1}
    W_{h,N}^c := \mathrm{span}\{ u_{h,k+1/2}^c\} \subset \Wh.
\end{align}
The dimension of $W_{h,N}^c$ is $N=1$, and \cref{eq:minimization_discrete} can be carried out efficiently.
\begin{remark}
    Since \cref{eq:half_step_error_equation_discrete} is a saddle-point problem, the Galerkin projection \cref{eq:corr_half_step_error_equation_discrete} may enlarge the error. This is in contrast to \cite{DPS22}, where \cref{eq:weak_formulation_discrete} was reformulated to a second-order form, which is symmetric and positive definite. In the latter situation, Galerkin projections correspond to best-approximations in the energy norm, and therefore do not enlarge the error.
\end{remark}

\subsubsection{Enriched space}
By construction, the correction $u_{h,k+1/2}^c$ computed via solving the projected problem \cref{eq:corr_half_step_error_equation_discrete} does, in general, not satisfy \cref{eq:half_step_error_equation_discrete}.
We will also consider enriched versions of $W_{h,N}^c$ as follows.
First, we may use the corrected even iterate to find $\tsup[1]{u}_{h,k+1/2}^{c,-}\in \Wh^-$ by solving
\begin{align*}
    \MMo \big( \uu_{k+1/2}^- + \tsup[1]{\uu}_{k+1/2}^{c,-}\big)= \SSo \uu_{k+1/2}^- + \ll^- - \AA (\uu_{k+1/2}^+ +\uu_{k+1/2}^{c,+}).
\end{align*}
The (block-)diagonal structure of $\bMSphereo$ and $\bMtro$ allows us to invert $\MMo= \bMSphereo\otimes \bMtro$ efficiently.
Second, we may compute another correction as follows. Suppose that $u_{h,k+1/2}^+ +u_{h,k+1/2}^{c,+}$ is close to the even-part $u_h^+$ of the solution to \cref{eq:weak_formulation_discrete}, then, for consistency reasons, we may expect that the solution $u_{h,k+1/2}^{-}+\tsup[2]{u}_{h,k+1/2}^{c,-}$ to the following system is a good approximation to $u_h^-$:
\begin{align*}
    \MMo \big( \uu_{k+1/2}^- + \tsup[2]{\uu}_{k+1/2}^{c,-} \big) = \SSo \big( \uu_{k+1/2}^-+ \tsup[2]{\uu}_{k+1/2}^{c,-}\big)  + \ll^- - \AA (\uu_{k+1/2}^+ +\uu_{k+1/2}^{c,+}).
\end{align*}
Computing $\tsup[2]{\uu}_{k+1/2}^{c,-}$ requires the inversion of $\MMo-\SSo$, which can be accomplished using a preconditioned conjugate gradient method as done in \cite{DPS22}.
We then define the enriched space
\begin{align}
    \label{eq:tW1}
    \tsup[1]{W}_{h,N}^c := \mathrm{span}\{ u_{h,k+1/2}^{c,+},u_{h,k+1/2}^{c,-},\tsup[1]{u}_{h,k+1/2}^{c,-},\tsup[2]{u}_{h,k+1/2}^{c,-}\} \subset \Wh,
\end{align}
which can be employed in the minimization \cref{eq:minimization_discrete}. The dimension of this space is $N=4$.  

\subsubsection{Another enriched space: including previous iterates}
\label{sec:Enriched_space+AA}
Since the minimization procedure is flexible in defining the correction space $W_{h,N}$, we may not only rely on minimizing the residual over functions obtained from Galerkin subspace projection. Borrowing ideas from GMRES, given an iterate $u_{h,k}$, we will also consider the space
\begin{align}
    \label{eq:WN_def}
    \tsup[1]{W}_{h,N}^{c,m}:= \tsup[1]{W}_{h,N}^c + \mathrm{span}\{u_{h,j}: k-m\leq j\leq k\},
\end{align}
where also the previous $m$ iterates are taken into account to construct the space for minimization. It is clear that if $m\geq k$ all previous iterates are considered.
In practice, memory limitations usually require keeping $m$ small.

\section{Numerical experiments}
\label{sec:Numerics}
We will investigate the behavior of the iteration and the influence of the different subspaces for the residual minimization discussed in \Cref{sec:numerical_realization} by means of a checkerboard test problem \cite{B05}.
Here, the spatial domain is given by $R=(0,7)\times(0,7)$, the inflow boundary condition is given by $q_\d=0$, and the internal source term $q$ as well as the scattering and absorption parameter, $\sigma_s$ and $\sigma_a$, respectively, are defined in \Cref{fig:checkerboard}.
Hence, the theoretical rate of \Cref{thm:main_result} is given by $\rho=\|\sigma_s/\sigma_t\|_\infty = 0.999$ here. We consider the Henyey-Greenstein scattering phase-function with anisotropy factor $0 \leq g < 1$, i.e.,
\begin{equation*}
    \theta(\ss\cdot \ss^{\p}) := \dfrac{1}{4\pi} \dfrac{1-g^2}{[1-2g(\ss\cdot \ss^{\p}) +g^2]^{3/2}}.
\end{equation*}

If not stated otherwise, the domain $R$ is triangulated using $100\,352$ elements, i.e., $\nre=50\,625$ and $\nro=100\,352$. Moreover, we will employ $1024$ elements on the half sphere, i.e., $\nse=1024$ and $\nso=3072$, which results in $360\,121\,344$ degrees of freedom.
Spherical integration is performed by using a high-order numerical quadrature. 
We note that the accurate assembly of $\bThetae$ and $\bThetao$ may require modified integration rules for $g=0.99$. We chose to show results for this value of $g$ to verify the robustness of our approach.
For a sketch of a corresponding polyhedral approximation of the sphere see \Cref{fig:checkerboard}. 
We ran our code on a dual AMD EPYC 7742 64-Core Processor, i.e., $128$ physical cores, with 1024 GB memory and Matlab R2023b with $128$ workers. The code is freely available at \cite{CODE}.
The iterations are stopped as soon as $\normM{\R_h(u_k)}<10^{-6}$.

\begin{figure}[ht]
	\includegraphics[width=0.49\textwidth]{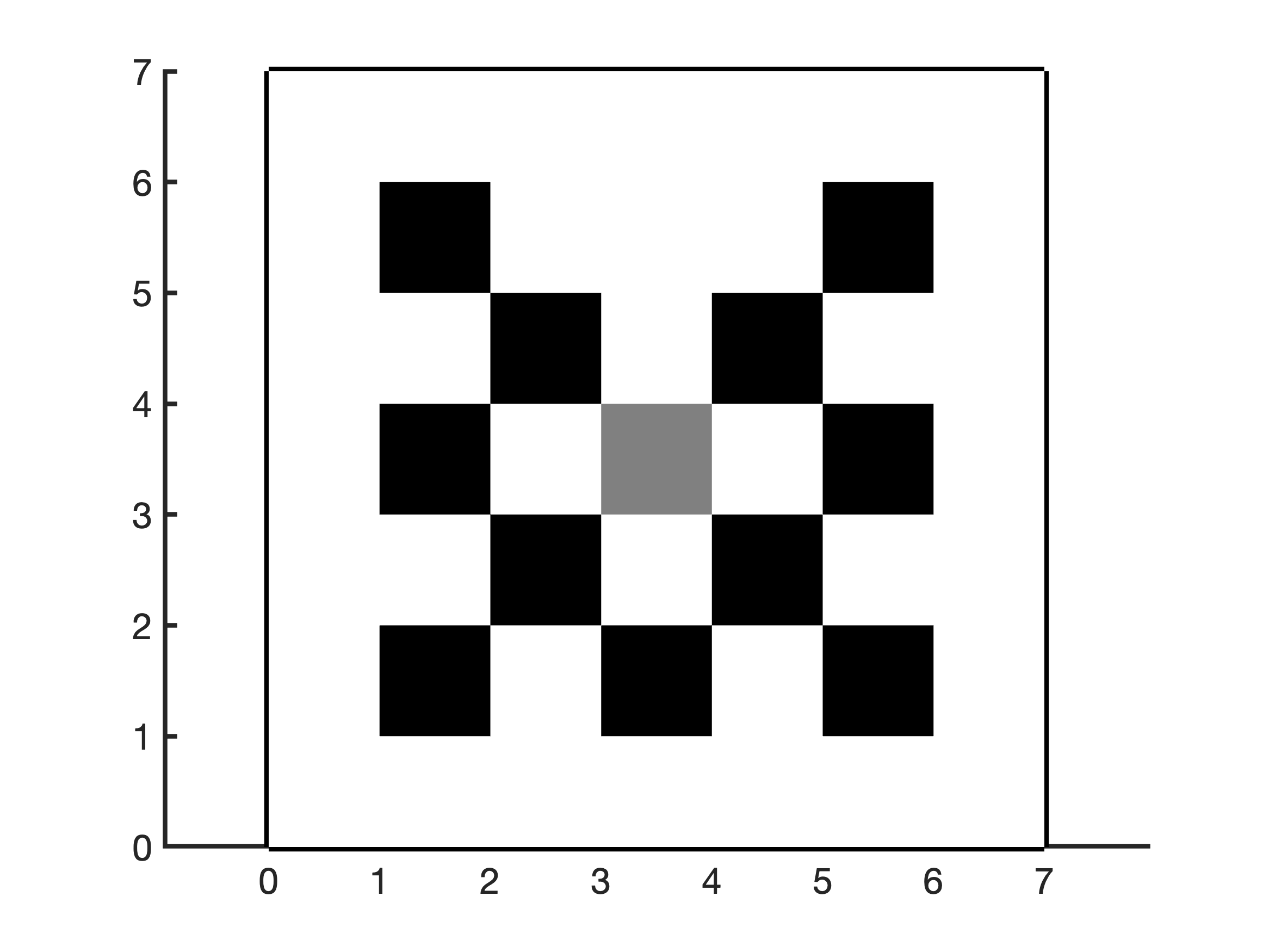}
	\includegraphics[width=0.49\textwidth]{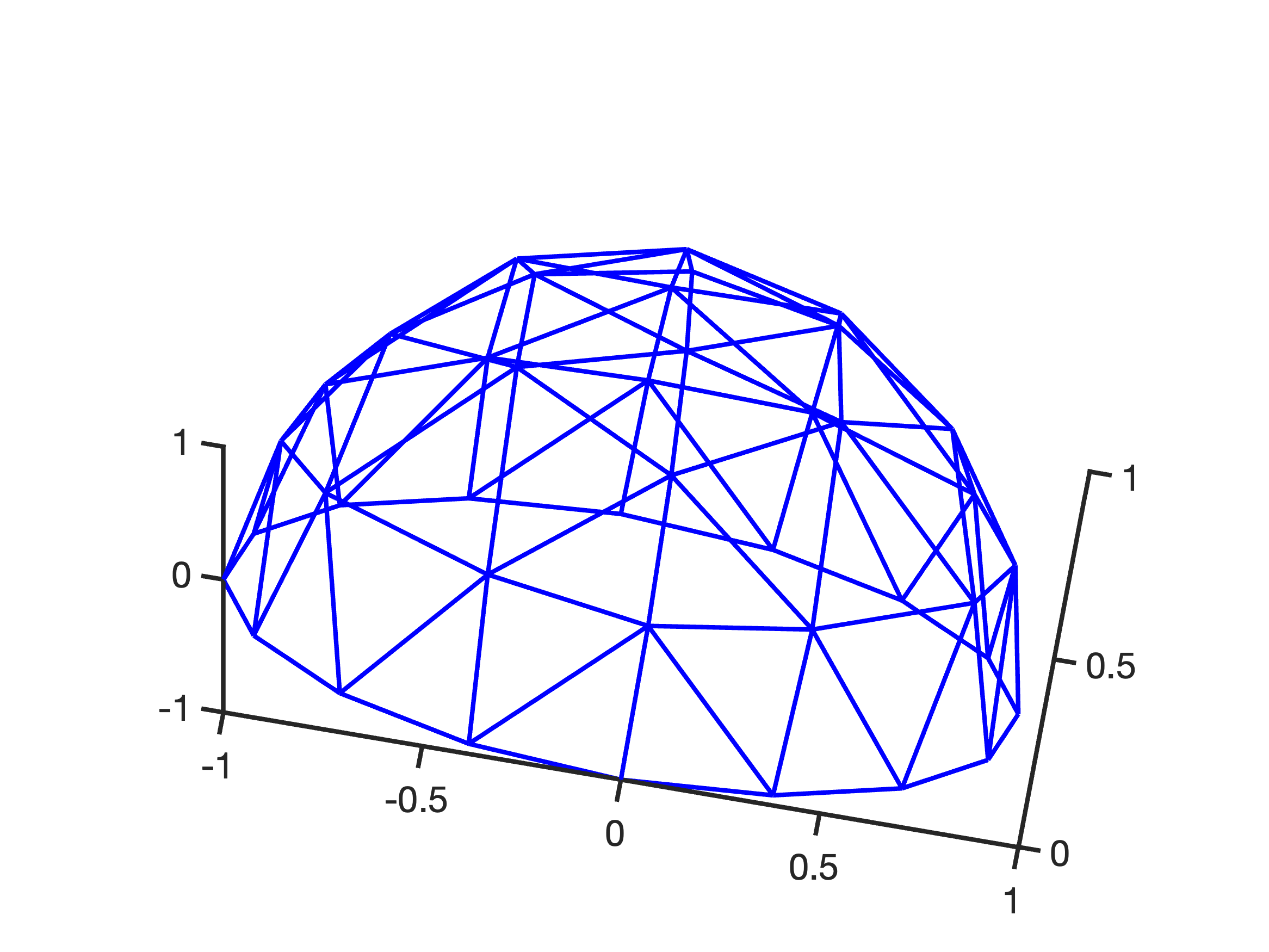}
	\caption{Left: geometry of the lattice problem in the checkerboard domain. White and gray areas are characterized by the optical parameters $\sigma_s=10$ and $\sigma_a=0.01$, while in the black zones $\sigma_s=0$ and $\sigma_a=1$. The internal source of radiation is $q=1$ in the gray square, $q=0$ outside of it. Right: Sketch of the spherical grid for the upper half sphere.
    \label{fig:checkerboard}}
\end{figure}

\subsection{\texorpdfstring{Minimization over $W_{h,N}^c$}{Minimization over W1}}
We investigate the performance of the residual minimization strategy when using the subspace $W_{h,N}^c$ defined in \cref{eq:W1} for different anisotropy parameters $g$. Furthermore, we investigate the behavior on the parameter $K$ in \cref{def:YK}. Here, we choose $K=1,6,15$ such that it corresponds to the number of even spherical harmonics of degree at most $l=0,2,4$, respectively. This choice is motivated by the observation that the $2l+1$ spherical harmonics of degree $l$ are eigenfunctions of $\Theta$ with eigenvalue $g^l$. For comparison, we also run the plain source iteration, cf. \cref{eq:SI}, which corresponds to the choice $K=0$ and $W_{h,N}=\{0\}$, respectively.

The resulting numbers of iterations are displayed in \Cref{tab:W1}.
We observe that the required number of iterations are considerably smaller if residual minimization is performed, i.e., comparing the cases $K>0$ and $K=0$.
As can also be seen from the decay of the residuals depicted in \Cref{fig:W1_residuals}, the minimization-based approach reduces the residual much faster than $\rho^k$ with theoretical rate $\rho=0.999$.
We note that also the plain source iteration, $K=0$, reduces the residual monotonically and slightly faster than predicted by theory, which might be explained by discretization effects, see also \Cref{tab:W1_grid_ref} (left), and the finite diameter of the domain $\R$ \cite{ES13}.
Increasing $K$ yields smaller iteration counts. 
In view of the computational complexity considerations made in \Cref{rem:cost_minimization}, the subspace correction computed here requires fewer floating point operations if the contraction rate of the residuals is better than $\rho^2= 0.998$, because $\mathrm{dim} (W_{h,N}^c) = 1$ for $K>0$. 
This is the case for all our experiments, as it is shown by the numbers in brackets in \cref{tab:W1}, indicating the maximum observed contraction rate $\max_k \normM{\R_h(u_k)}/\normM{\R_h(u_{k-1})}$ during the corresponding iterations. 

\begin{table}[ht]
    \centering \footnotesize
    \setlength{\tabcolsep}{8pt} 
    \renewcommand{\arraystretch}{1.2} 
    \caption{Iteration counts for minimization over $W_{h,N}^c$, defined in \cref{eq:W1}, for different anisotropy parameters $g$ and order $K$ of subspace correction, cf. \cref{def:YK}. Observe that $K=0$ corresponds to the source iteration without subspace correction, cf. \cref{eq:SI}. In brackets, $\max_k \normM{\R_h(u_k)}/\normM{\R_h(u_{k-1})}$. \label{tab:W1}} 
    \begin{tabular}{r r r r r r r r r r}
        \toprule
        & \multicolumn{6}{c}{$g$} \\
        \cmidrule{2-7}	
        $K$ & 0.1 & 0.3 & 0.5 & 0.7 & 0.9 & 0.99 \\
	    \midrule
        0 & 1018 (0.990) & 863 (0.988) & 701 (0.984) & 531 (0.979) & 358 (0.967) & 821 (0.989) \\
        1  & 62 (0.817)   & 58 (0.805)  & 54 (0.795)  &	57 (0.809)	& 97 (0.882)  & 455 (0.981)	\\
	    6  & 20 (0.532)	  &	25 (0.603)	& 14 (0.377)  &	13 (0.371)	& 31 (0.706)  & 283 (0.973)	\\
	    15 & 12 (0.312)	  &	10 (0.245)	& 8 (0.157)	  &	8 (0.216)	& 18 (0.657)  & 234	(0.969)\\
	    \bottomrule
    \end{tabular}
\end{table}

\begin{table}[ht]
    \centering \footnotesize
    \setlength{\tabcolsep}{8pt} 
    \renewcommand{\arraystretch}{1.2} 
    \caption{Average times (in seconds) per iteration for minimization over $W_{h,N}^c$, defined in \cref{eq:W1}, for different anisotropy parameters $g$ and order $K$ of subspace correction, cf. \cref{def:YK}, with $K=0$ the source iteration without subspace correction. In brackets, total time (in hours) taken by the iterative procedure. \label{tab:W1-timings}}
      \begin{tabular}{r r r r r r r r r r}
        \toprule
        & \multicolumn{6}{c}{$g$} \\
        \cmidrule{2-7}	
        $K$ & 0.1                           & 0.3                           & 0.5                           & 0.7 & 0.9 & 0.99 \\
	    \midrule
        0   &  \SI{66}{s} (\SI{18.72}{h})   &  \SI{66}{s} (\SI{15.93}{h})   &  \SI{68}{s} (\SI{13.32}{h})   &  \SI{67}{s} (\SI{9.98}{h}) & \SI{66}{s} (\SI{6.63}{h}) & \SI{69}{s} (\SI{15.87}{h}) \\
        1   & \SI{156}{s}  (\SI{2.70}{h})   & \SI{153}{s}  (\SI{2.48}{h})   & \SI{153}{s} (\SI{2.30}{h})   &	\SI{155}{s} (\SI{2.46}{h})	& \SI{153}{s} (\SI{4.14}{h})  & \SI{155}{s} (\SI{19.59}{h})	\\ 
	    6   & \SI{151}{s}  (\SI{0.84}{h})	& \SI{158}{s}  (\SI{1.10}{h})	& \SI{157}{s} (\SI{0.61}{h})   &	\SI{159}{s} (\SI{0.57}{h})	& \SI{156}{s} (\SI{1.35}{h})  & \SI{160}{s} (\SI{12.65}{h})	\\
	    15  & \SI{163}{s}  (\SI{0.54}{h})	& \SI{168}{s}  (\SI{0.47}{h})	& \SI{170}{s} (\SI{0.38}{h})   &	\SI{168}{s} (\SI{0.37}{h})	& \SI{164}{s} (\SI{0.82}{h})  & \SI{167}{s} (\SI{10.86}{h})\\
	    \bottomrule
    \end{tabular}
\end{table}
As discussed in \Cref{rem:cost_minimization} each iteration of the residual minimization scheme requires more operations than the source iteration. Therefore, we also compare the runtime of these methods using our implementation. In order to solve the linear systems in \cref{eq:even_parity} and \cref{eq:corr_half_step_error_equation_discrete} we use sparse LU-factorizations, which are precomputed before entering the iteration. For finer discretizations than the ones we use here, such a pre-computation might be restrictive in terms of memory. To solve \cref{eq:corr_half_step_error_equation_discrete} more efficiently for larger systems, we refer to \cite{DPS22} for a conjugate gradient method with a multigrid preconditioner. 
In any case, improved solvers for \cref{eq:even_parity} are equally beneficial for both the usual source iteration as well as the residual minimization based approach.
In \Cref{tab:W1-timings} we display the average runtime per step as well as the overall runtime for the considered methods.
These timings match the theoretical considerations of \Cref{rem:cost_minimization}.
Moreover, the computation time required for computing the solution to \cref{eq:corr_half_step_error_equation_discrete}, has a runtime less than one solve of \cref{eq:even_parity}, which can be seen from comparing the rows for $K>0$ with the row $K=0$.
Because of these observations, we do not show further timings but use computational complexity considerations to discuss the different methods in the following numerical experiments.

\begin{table}[ht]
    \centering \footnotesize
    \setlength{\tabcolsep}{8pt} 
    \renewcommand{\arraystretch}{1.2} 
    \caption{Iteration counts for minimization over $W_{h,N}^c$, defined in \cref{eq:W1}, for $g=0.7$ and $K=0$ (left) and $K=6$ (right), and different spatial and angular grids.
    \label{tab:W1_grid_ref}}

	\caption{Residual decay for correction searched in the subspace $W_{h,N}^c$. From left to right, we plot $\normM{\R_h(u_k)}$; first row $g=0.1,\ 0.3,\ 0.5$; second row, $g=0.7,\ 0.9,\ 0.99$. The lines' style refers to number of even eigenfunctions of the scattering operator employed: $K=0$ (standard source iteration) densely dotted line; $K=1$ solid line; $K=6$ dotted line; $K=15$ dashed line.
    \label{fig:W1_residuals}}
\end{figure}

\subsection{\texorpdfstring{Minimization over $\tsup[1]{W}_{h,N}^c$}{Minimization over tilde W1}}
As a second test case, we study how the residual minimization approach performs over the enriched subspace $\tsup[1]{W}_{h,N}^c$ defined in \cref{eq:tW1}.
Since the source iteration \cref{eq:SI} does not depend on the chosen subspace, the reader may refer to the $K=0$ row in \Cref{tab:W1} for corresponding iteration numbers.
As can be seen from \Cref{tab:tW1}, for moderate values of the anisotropy factor $g$ the improvement in the number of iterations is negligible with respect to minimization over $W_{h,N}^c$. However, for highly forward-peaked scattering, $g=0.99$, which causes the iteration to be notably slower than the other cases, the improvement is more visible, even for low order corrections ($K=1$). 
\Cref{fig:W1_tilde_residuals} shows the convergence history of the residuals, and we observe a robust convergence behavior.
Since $\mathrm{dim}(\tsup[1]{W}_{h,N}^c)=4$, minimization over this subspace is useful in terms of the theoretical computational complexity considerations if the contraction rate of the residuals stays below $\rho^5 = 0.995$, cf. \Cref{rem:cost_minimization}. Once again, our method achieves this requirement, as shown in brackets in \Cref{tab:tW1}. 
Except for $g\in \{0.9,0.99\}$ and $K=1$, we also have that $(\rho_{SI}^g)^5>\max_k \normM{\R_h(u_k)}/\normM{\R_h(u_{k-1})}$ with $\rho_{SI}^g$ denoting the observed (worst) contraction rate of the source iteration as displayed in \Cref{tab:W1}, i.e., minimization over $\tsup[1]{W}_{h,N}^c$ is more efficient.

\begin{table}[ht]
    \centering \footnotesize
    \setlength{\tabcolsep}{8pt} 
    \renewcommand{\arraystretch}{1.2} 
    \caption{Iteration counts for minimization over $\tsup[1]{W}_{h,N}^c$ defined in \cref{eq:tW1} for different anisotropy parameters $g$ and order $K$ of subspace correction, cf. \cref{def:YK}.  In brackets $\max_k \normM{\R_h(u_k)}/\normM{\R_h(u_{k-1})}$. \label{tab:tW1}}

	\caption{Residual decay for correction searched in the subspace $\tsup[1]{W}_{h,N}^c$. From left to right, we plot $\normM{\R_h(u_k)}$; first row $g=0.1,\ 0.3,\ 0.5$; second row, $g=0.7,\ 0.9,\ 0.99$. The lines style refers to the dimension of the corrections subspace: $K=1$ solid line; $K=6$ dotted line; $K=15$ dashed line.\label{fig:W1_tilde_residuals}}
\end{figure}

\subsection{\texorpdfstring{Minimization over $\tsup[1]{W}_{h,N}^{c,m}$}{Minimization over WN}}
Exploiting Anderson-type acceleration techniques as described in \Cref{sec:Enriched_space+AA} we observe a substantial reduction in the iteration count for all values of $g$ and already for moderate $m$ and low-order corrections. Indeed, comparing \Cref{tab:tW1} and \Cref{tab:tWN2}, where a history of $m=2$ iterates is taken into account for residual minimization and thus $\mathrm{dim}(\tsup[1]{W}_{h,N}^{c,2}) = 6$, we notice that already for $K=1$ the number of iterations is roughly reduced by a factor of $3$ for small $g$ and a factor of $2$ for $g$ close to $1$. The numbers in \Cref{tab:tWN4}, where $m=4$ was chosen, are comparable to those in \Cref{tab:tWN2}. 
Thus, we prefer the choice $m=2$ because it requires less memory. 
The computational complexity does not increase with $m$ in practice because the residuals of the previous iterates can be stored in memory.
Therefore, although $N=6$ and $N=8$ for $m=2$ and $m=4$, respectively, the number of residual computations is as in the previous subsection, i.e., $5$ per step.
Since for $g<0.99$ or $K>1$ both $\rho^5$ as well as $(\rho_{SI}^g)^5$ are larger than the observed contraction rates as shown in brackets in \Cref{tab:tWN2} and \Cref{tab:tWN4}, the proposed methodology is more efficient than the source iteration.

\begin{table}[ht]
    \centering \footnotesize
    \setlength{\tabcolsep}{8pt} 
    \renewcommand{\arraystretch}{1.2} 
    \caption{Iteration counts for minimization over $\tsup[1]{W}_{h,N}^{c,2}$ defined in \cref{eq:WN_def} for different anisotropy parameters $g$ and order $K$ of subspace correction, cf. \cref{def:YK}.  In brackets $\max_k \normM{\R_h(u_k)}/\normM{\R_h(u_{k-1})}$.
    \label{tab:tWN2}}

	\caption{Residual decay for correction searched in the subspace $\tsup[1]{W}_{h,N}^{c,4}$. From left to right, we plot $\normM{\R_h(u_k)}$; first row $g=0.1,\ 0.3,\ 0.5$; second row, $g=0.7,\ 0.9,\ 0.99$. The lines' style refers to number of even eigenfunctions of the scattering operator employed: $K=1$ solid line; $K=6$ dotted line; $K=15$ dashed line.
    \label{fig:Wc4_residuals}}
\end{figure}

\section{Conclusions and discussion}
\label{sec:Conclusion}
In this paper, we have developed a generic and flexible strategy to accelerate the source iteration for the solution of anisotropic radiative transfer problems using residual minimization. 
We showed convergence of the resulting method for any choice of subspace employed in the residual minimization.
The flexibility in choosing the subspace was used to exploit high-order diffusion corrections, which were shown to be effective for highly forward-peaked scattering.
Moreover, the numerical results confirmed that the required iteration counts do depend on the discretization only mildly.

We mention that our approach can be seen as a two-level scheme. We leave it to future work to extend it and compare it to angular multilevel schemes. Moreover, the analysis of the precise approximation properties of the considered subspaces is also left to future research. 
We close by mentioning that the efficient and robust solution of the source problem \cref{eq:RTE} is also relevant for solving eigenvalue problems, see, e.g., \cite{CFWBC13, BA22, SK24}.

\section*{Acknowledgements}
R.B. and M.S. acknowledge support by the Dutch Research Council (NWO) via grant OCENW.KLEIN.183.

\bibliographystyle{siamplain}
\bibliography{references_FINAL}

\begin{thebibliography}{10}

\bibitem{AL02}
{\sc M.~L. Adams and E.~W. Larsen}, {\em Fast iterative methods for discrete-ordinates particle transport calculations}, Prog. Nuclear Energy, 40 (2002), pp.~3--159, \url{https://doi.org/10.1016/S0149-1970(01)00023-3}.

\bibitem{AW18}
{\sc D.~Y. Anistratov and J.~S. Warsa}, {\em Discontinuous finite element quasi-diffusion methods}, Nucl. Sci. Eng., 191 (2018), pp.~105--120, \url{https://doi.org/10.1080/00295639.2018.1450013}.

\bibitem{AS09}
{\sc S.~R. Arridge and J.~C. Schotland}, {\em Optical tomography: forward and inverse problems}, Inverse Problems, 25 (2009), pp.~123010, 59, \url{https://doi.org/10.1088/0266-5611/25/12/123010}.

\bibitem{BA22}
{\sc A.~P. Barbu and M.~L. Adams}, {\em Convergence properties of a linear diffusion-acceleration method for k-eigenvalue transport problems}, Nucl. Sci. Eng., 197 (2022), pp.~517--533, \url{https://doi.org/10.1080/00295639.2022.2123205}.

\bibitem{CODE}
{\sc R.~Bardin and M.~Schlottbom}, {\em Code underlying the publication: {O}n accelerated iterative schemes for anisotropic radiative transfer using residual minimization}, \url{https://doi.org/10.5281/zenodo.14750632}.

\bibitem{B05}
{\sc T.~A. Brunner}, {\em Forms of approximate radiation transport}, in Nuclear Mathematical and Computational Sciences: A Century in Review, A Century Anew Gatlinburg, LaGrange Park, IL, 2003, American Nuclear Society.
\newblock Tennessee, April 6-11, 2003.

\bibitem{CFWBC13}
{\sc M.~T. Calef, E.~D. Fichtl, J.~S. Warsa, M.~Berndt, and N.~N. Carlson}, {\em Nonlinear {K}rylov acceleration applied to a discrete ordinates formulation of the k-eigenvalue problem}, J. Comput. Phys., 238 (2013), pp.~188--209, \url{https://doi.org/https://doi.org/10.1016/j.jcp.2012.12.024}.

\bibitem{CZ63}
{\sc K.~M. Case and P.~F. Zweifel}, {\em Existence and uniqueness theorems for the neutron transport equation}, J. Math. Phys., 4 (1963), pp.~1376--1385, \url{https://doi.org/10.1063/1.1703916}.

\bibitem{CZ67}
{\sc K.~M. Case and P.~F. Zweifel}, {\em Linear transport theory}, Addison-Wesley Publishing Co., Reading, Mass.-London-Don Mills, Ont., 1967.

\bibitem{DBSSP21}
{\sc S.~Dargaville, A.~Buchan, R.~Smedley-Stevenson, P.~Smith, and C.~Pain}, {\em A comparison of element agglomeration algorithms for unstructured geometric multigrid}, J. Comput. Appl. Math., 390 (2021), p.~113379, \url{https://doi.org/https://doi.org/10.1016/j.cam.2020.113379}.

\bibitem{DPS22}
{\sc J.~D\"{o}lz, O.~Palii, and M.~Schlottbom}, {\em On robustly convergent and efficient iterative methods for anisotropic radiative transfer}, J. Sci. Comput., 90 (2022), \url{https://doi.org/10.1007/s10915-021-01757-9}.

\bibitem{ES12}
{\sc H.~Egger and M.~Schlottbom}, {\em A mixed variational framework for the radiative transfer equation}, Math. Models Methods Appl. Sci., 22 (2012), pp.~1150014, 30, \url{https://doi.org/10.1142/S021820251150014X}.

\bibitem{ES13}
{\sc H.~Egger and M.~Schlottbom}, {\em An {$L_p$} theory for stationary radiative transfer}, Appl. Anal., 93 (2013), pp.~1283--1296, \url{https://doi.org/10.1080/00036811.2013.826798}.

\bibitem{E98}
{\sc K.~F. Evans}, {\em The spherical harmonics discrete ordinate method for three-dimensional atmospheric radiative transfer}, J. Atmospheric Sci., 55 (1998), pp.~429--446, \url{https://doi.org/10.1175/1520-0469(1998)055<0429:TSHDOM>2.0.CO;2}.

\bibitem{HSMT20}
{\sc T.~S. Haut, B.~S. Southworth, P.~G. Maginot, and V.~Z. Tomov}, {\em Diffusion synthetic acceleration preconditioning for discontinuous {G}alerkin discretizations of {$S_N$} transport on high-order curved meshes}, SIAM J. Sci. Comput., 42 (2020), pp.~B1271--B1301, \url{https://doi.org/10.1137/19M124993X}.

\bibitem{HIS06}
{\sc H.~Hensel, R.~Iza-Teran, and N.~Siedow}, {\em Deterministic model for dose calculation in photon radiotherapy}, Phys. Med. Biol., 51 (2006), pp.~675--693, \url{https://doi.org/10.1088/0031-9155/51/3/013}.

\bibitem{KR14}
{\sc G.~Kanschat and J.-C. Ragusa}, {\em A robust multigrid preconditioner for {$S_N$}-{DG} approximation of monochromatic, isotropic radiation transport problems}, SIAM J. Sci. Comput., 36 (2014), pp.~A2326--A2345, \url{https://doi.org/10.1137/13091600x}, \url{http://dx.doi.org/10.1137/13091600X}.

\bibitem{L10}
{\sc B.~Lee}, {\em A novel multigrid method for {Sn} discretizations of the mono-energetic {B}oltzmann transport equation in the optically thick and thin regimes with anisotropic scattering, part {I}}, SIAM J. Sci. Comput., 31 (2010), pp.~4744--4773, \url{https://doi.org/10.1137/080721480}, \url{http://dx.doi.org/10.1137/080721480}.

\bibitem{ML86}
{\sc G.~I. Marchuk and V.~I. Lebedev}, {\em Numerical methods in the theory of neutron transport}, Harwood Academic Publishers, Chur, second~ed., 1986.

\bibitem{MWTLS17}
{\sc X.~Meng, S.~Wang, G.~Tang, J.~Li, and C.~Sun}, {\em Stochastic parameter estimation of heterogeneity from crosswell seismic data based on the {M}onte {C}arlo radiative transfer theory}, J. Geophys. Eng., 14 (2017), pp.~621--633, \url{https://doi.org/10.1088/1742-2140/aa6130}.

\bibitem{OPHY23}
{\sc S.~Olivier, W.~Pazner, T.~S. Haut, and B.~C. Yee}, {\em A family of independent variable eddington factor methods with efficient preconditioned iterative solvers}, J. Comput. Phys., 473 (2023), p.~111747, \url{https://doi.org/https://doi.org/10.1016/j.jcp.2022.111747}.

\bibitem{PS20}
{\sc O.~Palii and M.~Schlottbom}, {\em On a convergent {DSA} preconditioned source iteration for a {DGFEM} method for radiative transfer}, Comput. Math. Appl., 79 (2020), pp.~3366--3377, \url{https://doi.org/10.1016/j.camwa.2020.02.002}.

\bibitem{RGK12}
{\sc J.~Ragusa, J.-L. Guermond, and G.~Kanschat}, {\em A robust {$S_N$-DG}-approximation for radiation transport in optically thick and diffusive regimes}, J. Comput. Phys., 231 (2012), pp.~1947--1962, \url{https://doi.org/https://doi.org/10.1016/j.jcp.2011.11.017}.

\bibitem{RW10}
{\sc J.~C. Ragusa and Y.~Wang}, {\em A two-mesh adaptive mesh refinement technique for {$S_N$} neutral-particle transport using a higher-order {DGFEM}}, J. Comput. Appl. Math., 233 (2010), pp.~3178--3188, \url{https://doi.org/10.1016/j.cam.2009.12.020}.

\bibitem{R11}
{\sc T.~P. Robitaille}, {\em Hyperion: an open-source parallelized three-dimensional dust continuum radiative transfer code}, A\&A, 536 (2011), p.~A79, \url{https://doi.org/10.1051/0004-6361/201117150}.

\bibitem{RPBSN18}
{\sc M.~Rustamzhon, D.~Press, G.~K. Baskaran, S.~Sadeghi, and S.~Nizamoglu}, {\em Unravelling radiative energy transfer in solid-state lighting}, J. Appl. Phys., 123 (2018), \url{https://doi.org/10.1063/1.5008922}.

\bibitem{SK24}
{\sc Q.~Shen and B.~Kochunas}, {\em Practical considerations for the adoption of {A}nderson acceleration in nonlinear diffusion accelerated transport}, Ann. Nucl. Energy, 199 (2024), p.~110330, \url{https://doi.org/10.1016/j.anucene.2023.110330}.

\bibitem{SH21}
{\sc Q.~Sheng and C.~D. Hauck}, {\em Uniform convergence of an upwind discontinuous {G}alerkin method for solving scaled discrete-ordinate radiative transfer equations with isotropic scattering}, Math. Comp., 90 (2021), pp.~2645--2669, \url{https://doi.org/10.1090/mcom/3670}.

\bibitem{SHH20}
{\sc B.~S. Southworth, M.~Holec, and T.~S. Haut}, {\em Diffusion synthetic acceleration for heterogeneous domains, compatible with voids}, Nucl. Sci. Eng., 195 (2020), pp.~119--136, \url{https://doi.org/10.1080/00295639.2020.1799603}.

\bibitem{STS17}
{\sc K.~Stamnes, G.~Thomas, and J.~Stamnes}, {\em Radiative transfer in the atmosphere and ocean}, Modeling and Simulation in Science, Engineering and Technology, Cambridge University Press, 2nd~ed., 2017, \url{https://doi.org/10.1017/9781316148549}.

\bibitem{SH20}
{\sc Z.~Sun and C.~D. Hauck}, {\em Low-memory, discrete ordinates, discontinuous {G}alerkin methods for radiative transport}, SIAM J. Sci. Comput., 42 (2020), pp.~B869--B893, \url{https://doi.org/10.1137/19M1271956}, \url{https://doi.org/10.1137/19M1271956}.

\bibitem{TRM12}
{\sc B.~Turcksin, J.~C. Ragusa, and J.~E. Morel}, {\em Angular multigrid preconditioner for {K}rylov-based solution techniques applied to the {$S_N$} equations with highly forward-peaked scattering}, Transport Theor. Stat., 41 (2012), pp.~1--22, \url{https://doi.org/10.1080/00411450.2012.672944}.

\bibitem{WN11}
{\sc H.~F. Walker and P.~Ni}, {\em Anderson acceleration for fixed-point iterations}, SIAM J. Numer. Anal., 49 (2011), pp.~1715--1735, \url{https://doi.org/10.1137/10078356X}.

\bibitem{WA18}
{\sc J.~S. Warsa and D.~Y. Anistratov}, {\em Two-level transport methods with independent discretization}, J. Comput. Theor. Transp, 47 (2018), pp.~424--450, \url{https://doi.org/10.1080/23324309.2018.1497991}.

\bibitem{WWM04}
{\sc J.~S. Warsa, T.~A. Wareing, and J.~E. Morel}, {\em {K}rylov iterative methods and the degraded effectiveness of diffusion synthetic acceleration for multidimensional {$S_N$} calculations in problems with material discontinuities}, Nucl. Sci. Eng., 147 (2004), pp.~218--248, \url{https://doi.org/10.13182/nse02-14}.

\bibitem{WPT15}
{\sc J.~Willert, H.~Park, and W.~Taitano}, {\em Using {A}nderson acceleration to accelerate the convergence of neutron transport calculations with anisotropic scattering}, Nucl. Sci. Eng., 181 (2015), pp.~342--350, \url{https://doi.org/10.13182/nse15-16}.

\end{thebibliography}

\end{document}